\documentclass{ifacconf}
\usepackage{color}

\usepackage{amsmath, amssymb} 
\usepackage{natbib}

\usepackage{multicol}
\usepackage{nicefrac}
\usepackage{graphicx}
\usepackage{epstopdf}
\usepackage{tikz}
\usepackage{bm}
\usepackage{picture}
\usepackage{epstopdf}
\usepackage{amsmath}
\usepackage{amsfonts}
\usepackage{amssymb}

%
%


%

%
\allowdisplaybreaks

\begin{document}
\begin{frontmatter}

\title{Prescribed Time Dual-Mode Extremum Seeking Control\thanksref{footnoteinfo}}

\thanks[footnoteinfo]{This work was supported by Mitacs.}

\author[First]{A. Waterman} 
\author[Second]{D. Dochain} 
\author[First]{M. Guay}
\address[First]{Department of Chemical Engineering, Queen's University, Kingston,  ON K7L 3N6, Canada. (email: a.waterman@queensu.ca, martin.guay@chee.queensu.ca}
\address[Second]{Avenue Georges Lemaˆıtre 4-6, B-1348, Louvain-La-Neuve, Belgium (e-mail: denis.dochain@uclouvain.be)}
\begin{abstract}
  We propose a dual mode extremum seeking control design technique that achieves real-time optimization of an unknown measured cost function in a prescribed time. The controller is shown to achieve semi-global practical stability of the optimal equilibrium for the state variables and the input variable for a class of nonlinear dynamical control systems with unknown dynamics. The design technique proposes a timescale transformation that enables the use of dither signals with increasing frequencies. The proposed timescale transformation is designed to avoid the singularity occurring at the prescribed time. A simulation study is performed to demonstrate the effectiveness of the proposed technique.
\end{abstract}

\begin{keyword}
Extremum-seeking control, prescribed-time stability, nonlinear unknown dynamic systems, real-time optimization
\end{keyword}

\end{frontmatter}

\section{Introduction} \label{sec0}
Extremum-seeking control (ESC) has a long history \cite{5572972} as a technique that can be used to solve a variety of real-time optimization problems \cite{5717929}. Following the proofs of the stability properties for ESC in \cite{KRSTIC2000595} and \cite{TAN2006889} there has been a substantial amount of research into the limitations and applications of ESC. An appealing feature of ESC is its applicability to model-free optimization. Model-free optimization methods refer to techniques that do not require knowledge of the mathematical relation between the objective function and the decision variables. ESC considers various approximation techniques to estimate the gradient of a measured objective function using persistent input perturbations.  

A number of recent studies have led to the development of extremum seeking control schemes with finite-time and fixed-time practical convergence properties. In \cite{9148026:20a} and \cite{POVEDA20205356:20b}, an ESC design technique was developed to achieve fixed-time practical convergence of a class of static maps to the unknown optimum. An alternative technique was also proposed in \cite{GUAY202131} and \cite{9304072} for the same class of systems where finite-time practical convergence of the averaged system was achieved as perturbations vanish. 

Finite-time convergence methods can generally be classified in the following three categories: finite-time, fixed-time, and prescribed-time. Finite-time methods are characterized by a dependence on initial conditions and design parameters to upper bound the settling time for the system. Fixed-time methods are independent of the initial conditions and conservatively upper-bound the settling time with the design parameters.

Prescribed-time methods do not depend on initial conditions or design parameters, and instead specify the settling time as a control parameter directly. This desirable property reduces the tuning effort of control parameters which can then be used without the need to re-tune for different initial conditions. In \cite{https://doi.org/10.1002/rnc.4084}, the system states are scaled by a function of time that approaches infinity as the settling time is approached. The controller is then designed in terms of the scaled state dynamics. A similar approach is demonstrated in \cite{9867788} for trajectory tracking on a 7-DOF robot manipulator. The prescribed-time convergence is achieved by scaling the system state with a monotonically increasing time-varying function that approaches $\infty$ as the prescribed time is approached. Another method is proposed in \cite{KRISHNAMURTHY2020108860} for a general class of nonlinear systems with system uncertainties. The control design uses a dynamic high gain scaling technique designed for asymptotic  stabilization of nonlinear systems. This is combined with a timescale transformation from the prescribed time horizon to the infinite time horizon, and time-dependent forcing terms in the scaling dynamics. The complete control system is shown to provide prescribed-time convergence of the state and input to the origin. 

There are relatively few ESC schemes with prescribed-time convergence properties in literature. A class of non-smooth ESC with prescribed-time stability was proposed in \cite{9369042} through design of class $\mathcal{K}\mathcal{L}_\mathcal{T}$ functions \cite{RIOS2018103}. In the cases where the system can be described by a static map, it is shown that the algorithm allows the convergence time to be completely prescribed by the user, and in the case of a dynamic plant to a bounded neighborhood of the prescribed time.
 In \cite{9867881:22a} and \cite{9867441:22b}, a ``chirpy" perturbation along with demodulation signals that utilize unbounded increasing frequencies is implemented in place of the standard constant frequency sinusoidal dither signal. This ``chirped" approach combined with a timescale transformation and increasing unbounded tuning gains enable the prescribed-time convergence of the average closed-loop system.


A restriction of ESC is that the optimization process must be slower than the process dynamics. In practice this time-scale separation limits the transient performance of the system. \cite{7386601} and \cite{8430884} demonstrate a dual-mode ESC approach that mitigates the impact of the time-scale separation constraint. The dual-mode controller combines a proportional feedback with an integral term. With appropriate assumptions the controller can be shown to stabilize the unknown dynamical system to the optimal equilibrium for the output, state and input variables. \cite{GUAY202355} provides a dual-mode ESC design with practical finite-time stability of the system to the optimum equilibrium for the state, measured output, and input variables. The proportional feedback component of the dual-mode is used to achieve finite-time stability while the integral feedback corrects for the true value of the input variable at steady-state. 

In this paper, we propose a dual-mode ESC approach that achieves prescribed-time practical convergence of the states and input of a dynamical system to the optimum of a measured cost function. Following an approach inspired by \cite{KRISHNAMURTHY2020108860}, a timescale transformation is proposed with a controller that achieves asymptotic stability in the new timescale and prescribed-time stability in the original timescale.

The paper is structured as follows. The problem formulation is given in Section \ref{sec1}. The proposed ESC and the stability analysis are presented in Section \ref{sec2} and \ref{sec3}. Section \ref{sec4} presents a brief simulation study. Conclusions are presented in Section \ref{sec5}.  

\section{Problem Formulation} \label{sec1} 
We consider a class of unknown nonlinear systems described by the following dynamical system: 
\begin{subequations}  
	\begin{align}
   \dot{x} &=f(x)+g(x)u \label{eq:sysdyn} \\
   y&=h(x)  \label{eq:sysout}
   \end{align}  
   \end{subequations}
where $x \in \mathbb{R}^n$ are the state variables, $u \in \mathbb{R}$ is the input variable, and $y \in \mathbb{R}$ is the measured output variable.  It is assumed that the function $h:\mathbb{R}^n  \rightarrow \mathbb{R}$ is twice continuously differentiable. In addition, the vector valued functions $f:\mathbb{R}^n \rightarrow \mathbb{R}^n$ and $g:\mathbb{R}^n \rightarrow \mathbb{R}^n$ are assumed to be sufficiently smooth. 

The cost function, $h(x)$, meets the following assumption. 
\begin{assum} \label{assum:cost1}
The function $h(x)$ is such that its gradient vanishes only at the minimizer $x^*$, that is:
$$\left.\frac{\partial h}{\partial x}\right|_{x=x^\ast}=0.$$
The Hessian at the minimizer is assumed to be positive and nonzero. In particular, there exists a positive constant $\alpha_h$ such that
$$\frac{\partial^2 h}{\partial x \partial x^\top }  \geq \alpha_h I$$ for all $x \in \mathcal{X} \subset \mathbb{R}$. 
\end{assum}
  
\begin{assum} \label{assum:lgh}
The function $|L_g h|$ is such that:
\begin{align*}
\beta_1 (h-h^\ast)\leq |L_g h|^2 \leq \beta_2 (h-h^\ast).
\end{align*}
for some constants $0<\beta_1<\beta_2$.
\end{assum}

\begin{assum} \label{assum:stab}
Let $D \subset \mathbb{R}^n$ be an open containing $x^\ast$ and $D_u \subset \mathbb{R}$, an open set containing $u^\ast$.For the system \eqref{eq:sysdyn}-\eqref{eq:sysout}, it is assumed that: 
\begin{align}
L_f h  + L_g h u - k |L_gh|^2 \leq -\alpha_1 \| x- \pi(u)\|^2
\end{align}
for $x \in D \subset \mathbb{R}^n$, $u \in D_u \subset \mathbb{R}$ and a positive constant $\alpha_1$. 
\end{assum}

Next, we consider the steady-state behaviour of the system to define the steady-state cost of system \eqref{eq:sysdyn}-\eqref{eq:sysout}. For any fixed $u=\hat{u}$, we assume that there exists a steady-state manifold $x=\pi(\hat{u})$ such that:
\begin{align*}
f(\pi(\hat{u}))+g(\pi(\hat{u}))\hat{u}-k g(\pi(\hat{u}))g(\pi(\hat{u}))^\top \frac{\partial h(\pi(\hat{u}))^\top}{\partial x}=0.
\end{align*}
The steady-state cost function is the value of the cost function $h(x)$ along the steady-state manifold. It is given by:
\begin{align} \label{eq:sscost}
\ell(\hat{u}) = h(\pi(\hat{u})).
\end{align}

We first establish some local properties of the system around the optimum input $x^\ast=\pi(u^\ast)$. Let $\tilde{x}=x-x^\ast$ and $\tilde{u}=u-u^\ast$. 
Let $L_g h (\pi(u))$ denote the value of the Lie derivative $L_g h$ along the steady-state manifold.
\begin{lem} \label{lem1}
Let Assumptions \ref{assum:cost1} and \ref{assum:stab} be satisfied for the nonlinear system \eqref{eq:sysdyn} with cost \eqref{eq:sysout}. Then the steady-state cost \eqref{eq:sscost} is such that
\begin{align*}
(u-u^\ast) L_g h(\pi(u)) \geq \alpha_2 \|u-u^\ast \|^{2}
\end{align*}
for a positive constant $\alpha_2>0$.
\end{lem}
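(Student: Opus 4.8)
The plan is to treat the scalar quantity $\phi(u):=L_g h(\pi(u))$ as a function of the scalar input $u$, show that it vanishes at $u^\ast$ with a strictly positive derivative there, and then integrate. First I would note that, since $\pi(u^\ast)=x^\ast$ and Assumption \ref{assum:cost1} gives $\partial h/\partial x|_{x^\ast}=0$, we have $\phi(u^\ast)=\frac{\partial h}{\partial x}\big|_{x^\ast}g(x^\ast)=0$. Thus the claimed inequality $(u-u^\ast)\phi(u)\ge \alpha_2\|u-u^\ast\|^2$ reduces, via a mean-value argument, to establishing $\phi'(u^\ast)>0$ together with continuity of $\phi'$ on a neighbourhood $D_u$ of $u^\ast$: picking any $\alpha_2\in(0,\phi'(u^\ast))$ and integrating $\phi'$ outward from $u^\ast$ then yields the bound.

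The core computation is $\phi'(u^\ast)$. I would obtain $\pi'(u)$ by implicit differentiation of the steady-state relation $F(x,u):=f(x)+g(x)\big(u-kL_g h(x)\big)=0$, which defines $x=\pi(u)$. Writing $A_{cl}:=\partial F/\partial x|_{(x^\ast,u^\ast)}$ for the closed-loop Jacobian, $G:=g(x^\ast)$, and $H_\ast:=\partial^2 h/\partial x\partial x^\top|_{x^\ast}$, the implicit function theorem gives $\pi'(u^\ast)=-A_{cl}^{-1}G$ (well defined once $A_{cl}$ is nonsingular, which the matrix-inequality step below guarantees). Differentiating $\phi$ and using $\partial h/\partial x|_{x^\ast}=0$ to kill the term $\frac{\partial h}{\partial x}\frac{\partial g}{\partial x}$ leaves $\frac{\partial (L_g h)}{\partial x}\big|_{x^\ast}=G^\top H_\ast$, hence
\begin{align*}
\phi'(u^\ast)=G^\top H_\ast\,\pi'(u^\ast)=-G^\top H_\ast A_{cl}^{-1}G.
\end{align*}
Setting $z:=A_{cl}^{-1}G$ (so $A_{cl}z=G$) rewrites this as $\phi'(u^\ast)=-z^\top A_{cl}^\top H_\ast z=-\tfrac12 z^\top\big(A_{cl}^\top H_\ast+H_\ast A_{cl}\big)z$.

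The key is then to show that $A_{cl}^\top H_\ast + H_\ast A_{cl}$ is negative definite, which makes $\phi'(u^\ast)>0$. This is exactly what Assumption \ref{assum:stab} provides after linearization: with $u=u^\ast$ fixed, $\dot h=L_f h+L_g h\,u^\ast-k|L_g h|^2\le -\alpha_1\|x-x^\ast\|^2$, and since $V(x):=h(x)-h^\ast$ has zero gradient and Hessian $H_\ast$ at $x^\ast$, a second-order expansion gives $\dot V=\xi^\top H_\ast A_{cl}\xi + O(\|\xi\|^3)\le -\alpha_1\|\xi\|^2$ for $\xi=x-x^\ast$. Dividing by $\|\xi\|^2$ and letting $\xi\to0$ along arbitrary directions yields $\tfrac12\big(A_{cl}^\top H_\ast+H_\ast A_{cl}\big)\preceq-\alpha_1 I$. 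Substituting back, $\phi'(u^\ast)\ge \alpha_1\|z\|^2=\alpha_1\|A_{cl}^{-1}G\|^2>0$, the last strict inequality using $G=g(x^\ast)\neq0$ (non-degeneracy of the input at the optimum, which also makes $A_{cl}$ Hurwitz and invertible via the Lyapunov certificate $H_\ast\succ0$ from Assumption \ref{assum:cost1}).

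I expect the main obstacle to be the matrix-inequality step: rigorously passing from the nonlinear dissipation inequality of Assumption \ref{assum:stab} to the linear matrix inequality $A_{cl}^\top H_\ast+H_\ast A_{cl}\prec0$, i.e. controlling the cubic remainder and justifying that the quadratic form dominates near $x^\ast$ uniformly in direction. A secondary point is that the argument is inherently local, establishing the bound on a neighbourhood $D_u$ of $u^\ast$ consistent with the ``local properties'' framing; if a bound uniform over a larger set were required one would additionally invoke continuity of $\phi'$ together with compactness of $D_u$.
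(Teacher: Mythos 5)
Your argument is correct in its essentials but follows a genuinely different route from the paper. The paper never linearizes: it applies Assumption \ref{assum:stab} with $u$ frozen at $u^\ast$, evaluates the resulting dissipation inequality at the manifold point $x=\pi(\hat u)$, and subtracts the identity $L_f h(\pi(\hat u))+L_g h(\pi(\hat u))\hat u-k|L_gh(\pi(\hat u))|^2=0$ (obtained by contracting the manifold equation with $\partial h/\partial x$); the two quantities differ only in the term multiplying $L_g h(\pi(\hat u))$, so everything else cancels and one gets $(\hat u-u^\ast)L_gh(\pi(\hat u))\ge\alpha_1\|\pi(\hat u)-\pi(u^\ast)\|^2$ directly, valid on all of $D_u$, with a final (and in the paper unjustified) passage from $\|\pi(\hat u)-\pi(u^\ast)\|$ to $|\hat u-u^\ast|$ that implicitly requires a lower Lipschitz bound on $\pi$. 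Your version instead establishes $\phi(u^\ast)=0$ and $\phi'(u^\ast)>0$ via the implicit function theorem and the linear matrix inequality $A_{cl}^\top H_\ast+H_\ast A_{cl}\preceq-2\alpha_1 I$; that LMI derivation is sound (it is precisely the infinitesimal version of the paper's finite subtraction), and it buys you an explicit constant $\alpha_2\approx\alpha_1\|A_{cl}^{-1}g(x^\ast)\|^2$ and a transparent account of where positivity comes from, at the price of being strictly local (a neighbourhood possibly smaller than $D_u$) and of needing extra regularity and non-degeneracy hypotheses. Two concrete points to tighten: (i) you need $g(x^\ast)\neq 0$, which does not follow from Assumptions \ref{assum:cost1} and \ref{assum:stab} alone (the only hypotheses the lemma invokes) but can be extracted from Assumption \ref{assum:lgh} combined with the Hessian bound, so you should either cite that assumption or state non-degeneracy explicitly; (ii) your mean-value step silently assumes $\phi'$ is continuous near $u^\ast$, i.e.\ $C^1$ dependence of $\pi$ on $u$, which again rests on the implicit function theorem hypotheses you should record. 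Neither point is fatal, and it is worth noting that the paper's own last line carries a gap of comparable size.
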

\begin{proof}
By Assumption \ref{assum:stab}
\begin{align*}
L_f h + L_g h u^\ast - k |L_g h|^2 \leq -\alpha_1 \|x- \pi(u^\ast)\|^2
\end{align*}
For points on the steady-state manifold, one obtains:
\begin{align*}
L_f h(\pi(\hat{u})) + L_g h(\pi(\hat{u}))& u^\ast  -k | L_g h(\pi(\hat{u}))|^2 \\ & \leq -\alpha_1 \|\pi(\hat{u})- \pi(u^\ast)\|^2
\end{align*}
Since $L_f h(\pi(\hat{u})) + L_g h(\pi(\hat{u})) \hat{u} - k | L_g h(\pi(\hat{u}))|^2 =0$, we have:
\begin{align*}
-L_g h(\pi(\hat{u})) \hat{u}+ L_g h(\pi(\hat{u})) u^\ast \leq -\alpha_1 \|\pi(\hat{u})- \pi(u^\ast)\|^2
\end{align*}
This leads to 
\begin{align*}
-L_g h(\pi(\hat{u})) (\hat{u}-u^\ast) & \leq -\alpha_1 \|\pi(\hat{u})- \pi(u^\ast)\|^2
\end{align*}
or:
\begin{align*}
L_g h(\pi(\hat{u})) (\hat{u}-u^\ast) - \alpha_1 \|\pi(\hat{u})- \pi(u^\ast)\|^2 & \geq 0.
\end{align*}
Finally:
\begin{align*}
L_g h(\pi(\hat{u})) (\hat{u}-u^\ast)  & \geq   \alpha_1 \|\pi(\hat{u})- \pi(u^\ast)\|^2.
\end{align*}

Hence, the steady-state cost is such that:
\begin{align*}
L_g h(\pi(\hat{u})) (\hat{u}-u^\ast)  & \geq   \alpha_2 \|\hat{u}-u^\ast\|^2.
\end{align*}
For some positive constant. This completes the proof. 
\end{proof}

\begin{rem}
As a result,  the stability conditions of the closed-loop system arising from Assumption \ref{assum:stab} guarantees that the expression $L_g h(\pi(\hat{u})$ plays the role of the gradient of a strictly convex function with the same optimum as $h(x)$.
\end{rem}

The objective of this study is to develop an ESC design technique that guarantees prescribed-time convergence to a neighbourhood of the unknown equilibrium minimizer, $(x^\ast,\, u^\ast)$, of the unknown measured objective function $y=h(x)$.
The proposed target controller is the dual-mode dynamical control given by:
\begin{equation}\label{eq:dm_1}
\begin{aligned}
u =& - k L_gh  + \hat{u} \\
\dot{\hat{u}}=& -\frac{k}{\tau_I} L_gh 
\end{aligned}
\end{equation}
In the next lemma, we state the stability properties of the target average closed-loop system given by:
\begin{equation} \label{eq:tarsys}
\begin{aligned}
\dot{x} =& f(x) - k g(x) L_g h + g(x) \hat{u} \\
\dot{\hat{u}} =& - \frac{k}{\tau_I} L_g h.
\end{aligned}
\end{equation}

\begin{lem} \label{lem2}
Consider system \eqref{eq:tarsys}. Let Assumptions \ref{assum:cost1}, \ref{assum:lgh} and \ref{assum:stab} be met. Then the optimum state $(x^\ast, u^\ast)$ is an exponentially stable equilibrium of the system.
\end{lem}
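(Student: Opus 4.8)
My plan is to prove exponential stability with a composite Lyapunov function that separates the fast state (boundary-layer) dynamics from the slow integrator dynamics. I would take
$$V = \left(h(x)-h^\ast\right) + \tfrac{\gamma}{2}\left(\hat{u}-u^\ast\right)^2$$
for a small weighting constant $\gamma>0$ fixed later, and work in the error variables $z = x-\pi(\hat{u})$ and $\tilde{u}=\hat{u}-u^\ast$. The guiding idea is that the proportional term $-kL_gh$ drives $x$ toward the steady-state manifold $\pi(\hat{u})$ (captured by the first term of $V$), while the integrator drives $\hat{u}\to u^\ast$ (captured by the second).

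First I would differentiate $V_1 = h(x)-h^\ast$ along \eqref{eq:tarsys}. A direct computation gives $\dot V_1 = L_fh + L_gh\,\hat{u} - k|L_gh|^2$, which is precisely the left-hand side of Assumption \ref{assum:stab} evaluated at the frozen input $u=\hat{u}$; hence $\dot V_1 \leq -\alpha_1\|x-\pi(\hat{u})\|^2 = -\alpha_1\|z\|^2$. Next I would differentiate $V_2=\tfrac12(\hat{u}-u^\ast)^2$ to get $\dot V_2 = -\tfrac{k}{\tau_I}\tilde{u}\,L_gh(x)$. Splitting $L_gh(x) = L_gh(\pi(\hat{u})) + [L_gh(x)-L_gh(\pi(\hat{u}))]$, Lemma \ref{lem1} bounds the manifold term via $\tilde{u}\,L_gh(\pi(\hat{u}))\geq \alpha_2\|\tilde{u}\|^2$, while local Lipschitz continuity of $L_gh$ bounds the remainder by $L\|z\|$. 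This yields $\dot V_2 \leq -\tfrac{k\alpha_2}{\tau_I}\|\tilde{u}\|^2 + \tfrac{kL}{\tau_I}|\tilde{u}|\,\|z\|$.

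Combining, $\dot V = \dot V_1 + \gamma\dot V_2$ carries the single cross term $\gamma\tfrac{kL}{\tau_I}|\tilde{u}|\|z\|$, which I would absorb with Young's inequality into the negative $\|z\|^2$ and $\|\tilde{u}\|^2$ terms. Choosing $\gamma$ small enough (e.g. any $\gamma < 2\alpha_1\alpha_2\tau_I/(kL^2)$) then gives $\dot V \leq -c_1\|z\|^2 - c_2\|\tilde{u}\|^2$ for positive constants $c_1,c_2$. To convert this into exponential stability I would show $V$ is sandwiched between quadratics in $(\tilde{x},\tilde{u})$ with $\tilde{x}=x-x^\ast$: the two-sided bound on $h-h^\ast$ comes from the positive-definite Hessian of Assumption \ref{assum:cost1}, and the equivalence of $\|z\|$ and $\|\tilde{x}\|$ follows from $\|x-\pi(\hat{u})\|\leq \|\tilde{x}\| + L_\pi|\tilde{u}|$ using local Lipschitz continuity of the manifold map $\pi$. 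These comparisons deliver $\dot V \leq -\kappa V$ for some $\kappa>0$, establishing exponential convergence of $(x,\hat{u})$, and hence of the input $u=-kL_gh+\hat{u}$, to $(x^\ast,u^\ast)$.

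The main obstacle I anticipate is controlling the coupling between the two modes: the remainder $L_gh(x)-L_gh(\pi(\hat{u}))$ appearing in $\dot V_2$ must be dominated by the $\|z\|^2$ margin produced by $\dot V_1$, which is what forces $\gamma$ (and implicitly the scale $\tau_I$) to be chosen in terms of the Lipschitz constants of $L_gh$ and $\pi$. Verifying that these Lipschitz bounds, together with the quadratic bounds on $h-h^\ast$, hold uniformly on a fixed neighbourhood of $x^\ast$ so that all constants are well defined is the key technical point; the remainder reduces to routine completion-of-squares estimates.
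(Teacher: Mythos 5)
Your proposal is correct and follows essentially the same route as the paper: the same Lyapunov function $h(x)-h^\ast$ plus a quadratic in $\hat{u}-u^\ast$ (your small $\gamma$ playing the role of the paper's large $\alpha$, which is the same choice up to rescaling $V$), Assumption \ref{assum:stab} for the state term, the split of $L_gh$ into its value on the manifold plus a Lipschitz remainder, Lemma \ref{lem1} for the integrator term, Young's inequality for the cross term, and Lipschitz continuity of $\pi$ to pass between $\|x-\pi(\hat{u})\|$ and $\|x-\pi(u^\ast)\|$. The only cosmetic difference is that you make the final comparison $\dot V\leq -\kappa V$ explicit via the Hessian bound, a step the paper leaves implicit.
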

\begin{proof}
We pose the Lyapunov function candidate: $V= \alpha (h(x)-h(x^\ast)) + \frac{\beta}{2} \tilde{u}^2$. Its rate of change along the trajectories of \eqref{eq:tarsys} is:
\begin{align*}
\dot{V}=\alpha\big(L_f h - k (L_g h)^2 + L_g h \hat{u}) - \frac{\beta k }{\tau_I} L_g h \tilde{u}.
\end{align*}

By Assumptions \ref{assum:stab}, one can write:
\begin{align*}
\dot{V}\leq -\alpha \alpha_1 \| x- \pi(\hat{u}) \|^2 - \frac{\beta k }{\tau_I} L_g h \tilde{u}.
\end{align*}
Next, we consider the expression:
\begin{align*}
\dot{V}\leq -\alpha \alpha_1 &\| x- \pi(\hat{u}) \|^2 - \frac{\beta k }{\tau_I} L_g h(\pi(\hat{u}) \tilde{u} \\
& -\frac{\beta k }{\tau_I} (L_g h - L_g h(\pi(\hat{u})) \tilde{u}.
\end{align*}
By smoothness of $g(x)$ and $h(x)$, there exists a positive constant $L_1$ such that:
\begin{align*}
\dot{V}\leq -\alpha \alpha_1 &\| x- \pi(\hat{u}) \|^2 - \frac{\beta k }{\tau_I} L_g h(\pi(\hat{u})) \tilde{u} \\
& +\frac{\beta k L_1 }{\tau_I}\| x- \pi(\hat{u}) \| \| \tilde{u}\|.
\end{align*}
Using Lemma \eqref{lem1}, we obtain the following inequality:
\begin{align*}
\dot{V}\leq -\alpha \alpha_1 \| x- \pi(\hat{u}) &\|^2 - \frac{\beta k \alpha_2 }{\tau_I} \| \tilde{u} \|^2\\
& +\frac{\beta k L_1 }{\tau_I}\| x- \pi(\hat{u}) \| \| \tilde{u}\|.
\end{align*}
We apply Young's inequality to the last term in the last equality, we can write:
\begin{align*}
\dot{V}\leq -\bigg( \alpha &\alpha_1 -\frac{\beta k L_1 k_1}{2 \tau_I}\bigg) \| x- \pi(\hat{u}) \|^2 
\\ & -\bigg( \frac{\beta k \alpha_2 }{\tau_I}-\frac{\beta k L_1}{2 k_1 \tau_I}\bigg) \| \tilde{u} \|^2.
\end{align*}
for some positive constant $k_1$. We let $k_1= \frac{L_1}{\alpha_2}$ to obtain:
\begin{align*}
\dot{V}\leq -\bigg( \alpha &\alpha_1 -\frac{\beta k L_1^2}{2 \alpha_2 \tau_I}\bigg) \| x- \pi(\hat{u}) \|^2 -\frac{\beta k \alpha_2 }{2 \tau_I} \| \tilde{u} \|^2.
\end{align*}
One can always pick $\alpha$ large enough to guarantee that $$\bigg( \alpha \alpha_1 -\frac{\beta k L_1^2}{2 \alpha_2 \tau_I}\bigg)>0.$$ As a result, one can use the triangle inequality on the first term to obtain:
\begin{align*}
\dot{V}\leq& -\bigg(  \alpha \alpha_1 -\frac{\beta k L_1^2}{2 \alpha_2 \tau_I}\bigg) \| x- \pi(u^\ast) \|^2\\
&+\bigg(  \alpha \alpha_1 -\frac{\beta k L_1^2}{2 \alpha_2 \tau_I}\bigg) \| \pi(\hat{u})- \pi(u^\ast) \|^2 -\frac{\beta k \alpha_2 }{2 \tau_I} \| \tilde{u} \|^2.
\end{align*}
Since $g(x)$ and $f(x)$ are smooth, then it follows that the steady-state map $\pi(u)$ is a Lipschitz function. As a result, there exists a positive constant $L_{\pi}$ such that:
\begin{align*}
\dot{V}\leq& -\bigg(  \alpha \alpha_1 -\frac{\beta k L_1^2}{2 \alpha_2 \tau_I}\bigg) \| x- \pi(u^\ast) \|^2\\
&+L_\pi \bigg(  \alpha \alpha_1 -\frac{\beta k L_1^2}{2 \alpha_2 \tau_I}\bigg) \| \tilde{u} \|^2 -\frac{\beta k \alpha_2 }{2 \tau_I} \| \tilde{u} \|^2.
\end{align*}
We let $\alpha=k_2 + \frac{\beta k L_1^2}{2 \alpha_1 \alpha_2 \tau_I}$ to obtain:
\begin{align*}
\dot{V}\leq& -k_2 \alpha_1 \| x- \pi(u^\ast) \|^2 -\bigg(\frac{\beta k \alpha_2 }{2 \tau_I}-k_2 \alpha_1 L_\pi\bigg) \| \tilde{u} \|^2.
\end{align*}
We can finally assign $\beta=\frac{4 k_2 \tau_I  \alpha_1 L_\pi}{\alpha_2k}$ which yields the following inequality:
\begin{align*}
\dot{V}\leq& -k_2 \alpha_1 \| x- \pi(u^\ast) \|^2 -k_2 \alpha_1 L_\pi \| \tilde{u} \|^2.
\end{align*}
As a result, it follows that the optimum operating point $(x^\ast, u^\ast)$ is an exponentially stable equilibrium of the target system \eqref{eq:tarsys}. This completes the proof.
\end{proof}
\section{Dual-mode controller} \label{sec2}
\subsection{Timescale transformation}

Following the strategy proposed in \cite{KRISHNAMURTHY2020108860}, we rewrite the dynamic feedback controller in the $t$ timescale as follows. The time scale $t(\tau) = \frac{T \tau}{T+\tau}$ has a well defined inverse given by:
\begin{align*}
\tau(t) = \frac{t T}{T-t}.
\end{align*} 
This yields the timescale transformation:
\begin{align*}
\frac{d \tau}{dt} = \frac{T^2}{(T-t)^2}, \qquad \tau(0)=0.
\end{align*}
Its inverse defines the timescale transformation
\begin{align*}
    \frac{dt}{d\tau}=\frac{T^2}{(T+\tau)^2}, \qquad t(0)=0.
\end{align*}

The basic strategy for the prescribed time stabilization problem is to design a control system that achieves asymptotic stability in the $\tau$ timescale. The translation of the controller in the $t$ timescale ensures that the system reaches the origin at $t=T$. In the current context, we seek a control system that can reach the optimum equilibrium $(x^\ast,\,u^\ast)$ at a prescribed time $T$.

\subsection{Target prescribed-time controller}

We propose the dynamic feedback:
\begin{equation} \label{eq:PTtarsys}
\begin{aligned}
\frac{d \hat{u}}{dt} =& -\frac{k}{\tau_I}  \frac{T^2}{(T-t)^2} L_g h \\
u =& -k\bigg(1+ \frac{T^2}{(T-t)^2}\bigg)  L_g h + \hat{u}
\end{aligned}
\end{equation}
which is well defined for $t \in [0, T)$.  The following theorem provides the stability properties of the proposed control system.

\begin{lem} \label{lem3}
Consider the nonlinear system \eqref{eq:sysdyn} with cost function \eqref{eq:sysout} subject to the controller \eqref{eq:PTtarsys}. Let Assumptions \eqref{assum:cost1}, \eqref{assum:lgh} and \eqref{assum:stab} be met. Then, for some positive constant $T$, the trajectories of the system are bounded and
\begin{align*} 
\lim_{t \rightarrow T} x(t)=x^\ast, \,\,\, \lim_{t\rightarrow T} \hat{u}(t) = u^\ast.
\end{align*}
where $x^\ast, \, u^\ast$ are the equilibrium conditions that minimize the cost function $h(x)$.
\end{lem}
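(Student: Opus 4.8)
The plan is to exploit the timescale transformation so that the prescribed-time problem on $[0,T)$ becomes an asymptotic stabilization problem on the infinite horizon $\tau\in[0,\infty)$, since $\tau(t)=\tfrac{tT}{T-t}\to\infty$ exactly as $t\to T^-$. I would first record the identity that drives everything: along the transformation $\frac{T^2}{(T-t)^2}=\frac{(T+\tau)^2}{T^2}=:\phi$, so that multiplying the $t$-timescale closed loop by $\frac{dt}{d\tau}=\phi^{-1}$ converts the controller \eqref{eq:PTtarsys} into an autonomous-looking system in $\tau$. Carrying out this substitution, the integrator dynamics become $\frac{d\hat u}{d\tau}=-\frac{k}{\tau_I}L_g h$, i.e. exactly the integral channel of the target system \eqref{eq:tarsys} of Lemma \ref{lem2}, while the state dynamics become $\frac{dx}{d\tau}=\phi^{-1}\big(f(x)-kg(x)L_g h+g(x)\hat u\big)-kg(x)L_g h$.

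Thus in the $\tau$ timescale the closed loop is a perturbation of the exponentially stable target system: the drift and the $g\hat u$ channel are weighted by the vanishing factor $\phi^{-1}$, while the proportional feedback $-kg(x)L_g h$ produced by the ``$+1$'' in \eqref{eq:PTtarsys} (which is precisely what keeps the feedback bounded away from the singularity) persists. The goal is then to prove asymptotic stability of $(x^\ast,u^\ast)$ for this $\tau$-system, which by $\tau\to\infty\Leftrightarrow t\to T^-$ yields the prescribed-time claim.

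For the stability step I would reuse the Lyapunov candidate of Lemma \ref{lem2}. Differentiating it along the $\tau$-dynamics and invoking Assumption \ref{assum:stab} with $u=\hat u$ gives $\frac{dV}{d\tau}\le -\frac{\alpha\alpha_1}{\phi}\|x-\pi(\hat u)\|^2-\alpha k (L_g h)^2-\frac{\beta k}{\tau_I}L_g h\,\tilde u$. The cross term is then split as in Lemma \ref{lem1}, writing $L_g h=L_g h(\pi(\hat u))+(L_g h-L_g h(\pi(\hat u)))$, using $L_g h(\pi(\hat u))\tilde u\ge\alpha_2\|\tilde u\|^2$ together with the Lipschitz bound $|L_g h-L_g h(\pi(\hat u))|\le L_1\|x-\pi(\hat u)\|$, followed by Young's inequality exactly as in the proof of Lemma \ref{lem2}.

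The main obstacle is a scaling imbalance: the damping on the state error $\|x-\pi(\hat u)\|^2$ carries the vanishing factor $\phi^{-1}$, whereas the damping on $\tilde u$ and the cross term do not, so the constant-coefficient argument of Lemma \ref{lem2} no longer produces a sign-definite $\frac{dV}{d\tau}$ as $\phi\to\infty$. I would resolve this by weighting the state part of the Lyapunov function with the blow-up factor itself, taking $V(\tau)=\alpha\,\phi(\tau)\,(h(x)-h(x^\ast))+\frac{\beta}{2}\tilde u^2$; the factor $\phi$ cancels the $\phi^{-1}$ on the state-error damping, and the extra term $\alpha\phi'(h-h^\ast)$ it creates is dominated by the persistent $-\alpha k\phi (L_g h)^2$ contribution through Assumption \ref{assum:lgh}, since $(L_g h)^2\ge\beta_1(h-h^\ast)$ while $\phi'/\phi=2/(T+\tau)\to0$. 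After choosing $\alpha$ large and $\beta$ as in Lemma \ref{lem2}, this gives $\frac{dV}{d\tau}\le0$, negative definite in $(x-\pi(\hat u),\tilde u,L_g h)$ for $\tau$ large, hence asymptotic stability in $\tau$ and $(x(\tau),\hat u(\tau))\to(x^\ast,u^\ast)$; boundedness on $[0,T)$ follows from monotonicity of $V$, and translating back through $\tau\to\infty\Leftrightarrow t\to T^-$ gives $\lim_{t\to T}x(t)=x^\ast$ and $\lim_{t\to T}\hat u(t)=u^\ast$.
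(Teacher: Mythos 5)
Your proposal is correct and takes essentially the same route as the paper: your $\tau$-weighted Lyapunov function $\alpha\,\phi(\tau)\,(h-h^\ast)+\tfrac{\beta}{2}\tilde{u}^2$ is exactly the paper's $W=\tfrac{\alpha}{v(t)}(h-h^\ast)+\tfrac{\beta}{2}\tilde{u}^2$ with $\phi=1/v$, and your key mechanism---dominating the blow-up term $\alpha\phi'(h-h^\ast)$ by the persistent $-\alpha k\,\phi\,(L_gh)^2$ damping via Assumption \ref{assum:lgh}, after the same Lemma~\ref{lem1}/Lipschitz/Young treatment of the cross term---is precisely the paper's. The only cosmetic difference is that the paper carries out the estimate in the $t$ timescale and integrates $\dot{W}\leq \tfrac{2}{T-t}W-k\beta_1\tfrac{T^2}{(T-t)^2}W$ explicitly to exhibit $W(t)\to 0$ as $t\to T$, whereas you argue asymptotically in $\tau$.
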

\begin{proof}
We pose the candidate Lyapunov function $W=\frac{\alpha}{v(t)}(h(x)-h(x^\ast))+\frac{\beta}{2} \tilde{u}^2$ where $v(t)=\frac{(T-t)^2}{T^2}$.  We note that $W\geq \alpha \alpha_h \|x-x^\ast\|^2 + \frac{\beta}{4} \tilde{u}^2$.  Differentiating with respect to $t$, we obtain:
\begin{align*}
\dot{W} =&-\frac{\dot{v}}{v(t)^2}\alpha (h(x)-h(x^\ast)) - \frac{\alpha k}{v(t)}\frac{T^2}{(T-t)^2} L_gh^2 \\
             &+ \frac{\alpha}{v(t)}\bigg( L_f h - k (L_g h)^2 + L_g h \hat{u}\bigg) - \frac{\beta k }{v(t) \tau_I} L_g h(\pi(\hat{u}) \tilde{u} \\
             &-\frac{\beta k }{\tau_I} (L_g h - L_g h(\pi(\hat{u})) \tilde{u}.
\end{align*}

By Assumption \ref{assum:stab}, one can write:
\begin{align*}
\dot{W} =&-\frac{\dot{v}}{v(t)^2}\alpha (h(x)-h(x^\ast)) - \frac{\alpha k}{v(t)}\frac{T^2}{(T-t)^2} L_gh^2 \\
             &- \frac{\alpha \alpha_1}{v(t)} \| x- \pi(\hat{u}) \|^2  - \frac{\beta k}{v(t) \tau_I} L_g h \tilde{u}.
\end{align*}
By smoothness of $g(x)$ and $h(x)$, there exists a positive constant $L_1$ such that:
\begin{align*}
\dot{W} =&-\frac{\dot{v}}{v(t)^2}\alpha (h(x)-h(x^\ast)) - \frac{\alpha k}{v(t)}\frac{T^2}{(T-t)^2} L_gh^2 \\
             &- \frac{\alpha \alpha_1}{v(t)} \| x- \pi(\hat{u}) \|^2 - \frac{\beta k }{v(t) \tau_I} L_g h(\pi(\hat{u}) \tilde{u} \\
             &+\frac{\beta L_1 k }{v(t)\tau_I} \| x- \pi(\hat{u}) \| \| \tilde{u}\|.
            \end{align*}
 Using Lemma \eqref{lem1}, we obtain the following inequality:
\begin{align*}
\dot{W} =&-\frac{\dot{v}}{v(t)^2}\alpha (h(x)-h(x^\ast)) - \frac{\alpha k}{v(t)}\frac{T^2}{(T-t)^2} L_gh^2 \\
             &- \frac{\alpha \alpha_1}{v(t)} \| x- \pi(\hat{u}) \|^2 - \frac{\beta k \alpha_2 }{v(t) \tau_I} \| \tilde{u} \|^2 \\
             &+\frac{\beta L_1 k }{v(t)\tau_I} \| x- \pi(\hat{u}) \| \| \tilde{u}\|.
            \end{align*}
We apply Young's inequality to the last term in the last equality, we can write:
\begin{align*}
\dot{W} =&-\frac{\dot{v}}{v(t)^2}\alpha (h(x)-h(x^\ast)) - \frac{\alpha k}{v(t)}\frac{T^2}{(T-t)^2} L_gh^2 \\
             &-\frac{1}{v(t)}\bigg( \alpha \alpha_1 -\frac{\beta k L_1 k_1}{2 \tau_I}\bigg) \| x- \pi(\hat{u}) \|^2 
\\ & -\frac{1}{v(t)}\bigg( \frac{\beta k \alpha_2 }{\tau_I}-\frac{\beta k L_1}{2 k_1 \tau_I}\bigg) \| \tilde{u} \|^2           \end{align*}
for some positive constant $k_1$. We let $k_1= \frac{L_1}{\alpha_2}$ to obtain:
\begin{align*}
\dot{W} =&-\frac{\dot{v}}{v(t)^2}\alpha (h(x)-h(x^\ast)) - \frac{\alpha k}{v(t)}\frac{T^2}{(T-t)^2} L_gh^2 \\
             &-\frac{1}{v(t)}\bigg( \alpha \alpha_1 -\frac{\beta k L_1^2}{2 \alpha_2 \tau_I}\bigg) \| x- \pi(\hat{u}) \|^2 \\
             &-\frac{\beta k \alpha_2 }{2 v(t) \tau_I} \| \tilde{u} \|^2.           \end{align*}
One can always pick $\alpha$ large enough to guarantee that $$\bigg( \alpha \alpha_1 -\frac{\beta k L_1^2}{2 \alpha_2 \tau_I}\bigg)>0.$$ As a result, one can use the triangle inequality to obtain:
\begin{align*}
\dot{W} =&-\frac{\dot{v}}{v(t)^2}\alpha (h(x)-h(x^\ast)) - \frac{\alpha k}{v(t)}\frac{T^2}{(T-t)^2} L_gh^2 \\
             &-\frac{1}{v(t)}\bigg(  \alpha \alpha_1 -\frac{\beta k L_1^2}{2 \alpha_2 \tau_I}\bigg) \| x- \pi(u^\ast) \|^2\\
&+\frac{1}{v(t)} \bigg(  \alpha \alpha_1 -\frac{\beta k L_1^2}{2 \alpha_2 \tau_I}\bigg) \| \pi(\hat{u})- \pi(u^\ast) \|^2 -\frac{\beta k \alpha_2 }{2  v(t)\tau_I} \| \tilde{u} \|^2.       
\end{align*}
Since $g(x)$ and $f(x)$ are smooth, then it follows that the steady-state map $\pi(u)$ is a Lipschitz function. As a result, there exists a positive constant $L_{\pi}$ such that:
\begin{align*}
\dot{W} =&-\frac{\dot{v}}{v(t)^2}\alpha (h(x)-h(x^\ast)) - \frac{\alpha k}{v(t)}\frac{T^2}{(T-t)^2} L_gh^2 \\
             &-\frac{1}{v(t)}-\bigg(  \alpha \alpha_1 -\frac{\beta k L_1^2}{2 v(t) \alpha_2 \tau_I}\bigg) \| x- \pi(u^\ast) \|^2\\
&+\frac{L_\pi}{v(t)} \bigg(  \alpha \alpha_1 -\frac{\beta k L_1^2}{2 \alpha_2 \tau_I}\bigg) \| \tilde{u} \|^2 -\frac{\beta k \alpha_2 }{2 v(t) \tau_I} \| \tilde{u} \|^2.  
\end{align*}
We let $\alpha=k_2 + \frac{\beta k L_1^2}{2 \alpha_1 \alpha_2 \tau_I}$ to obtain:
\begin{align*}
\dot{W} =&-\frac{\dot{v}}{v(t)^2}\alpha (h(x)-h(x^\ast)) - \frac{\alpha k}{v(t)}\frac{T^2}{(T-t)^2} L_gh^2 \\
             &-\frac{k_2 \alpha_1}{v(t)} \| x- \pi(u^\ast) \|^2 -\frac{1}{v(t)}\bigg(\frac{\beta k \alpha_2 }{2 \tau_I}-k_2 \alpha_1 L_\pi\bigg) \| \tilde{u} \|^2
\end{align*}
We can finally assign $\beta=\frac{4 k_2 \tau_I  \alpha_1 L_\pi}{\alpha_2k}$ which yields the following inequality:
\begin{align*}
\dot{W} =&-\frac{\dot{v}}{v(t)^2}\alpha (h(x)-h(x^\ast)) - \frac{\alpha k}{v(t)}\frac{T^2}{(T-t)^2} L_gh^2 \\
             &-\frac{k_2 \alpha_1}{v(t)} \| x- \pi(u^\ast) \|^2 -\frac{k_2 \alpha_1 L_\pi}{v(t)} \| \tilde{u} \|^2.
\end{align*}
Next, we invoke the definition of $v(t)$, and, Assumption \ref{assum:lgh}, to upper bound the second term of the last inequality, one obtains:
\begin{align*}
\dot{W} \leq&\frac{2T^2}{(T-t)^3} \alpha (h(x)-h(x^\ast)) - \alpha k \beta_1 \frac{T^4}{(T-t)^4} (h(x)-h(x^\ast)) \\
             &-k_2 \frac{1}{v(t)}\bigg(\alpha_1 \| x- \pi(u^\ast)\|^2 +\alpha_1 L_\pi \tilde{u}^2\bigg).
\end{align*}
Upon substitution of the Lyapunov function $W$, one can write:
\begin{align*}
\dot{W} \leq&\frac{2}{(T-t)} W -  k \beta_1 \frac{T^2}{(T-t)^2} W +\frac{k \beta_1 \beta}{2} \frac{T^2}{(T-t)^2} \tilde{u}^2\\
             &- \frac{k_2}{v(t)}\bigg(\alpha_1 \| x- \pi(u^\ast)\|^2 +\alpha_1 L_\pi \tilde{u}^2\bigg).
\end{align*}
We can assign $k_2=\frac{k \beta_1 \beta}{2 \alpha_1 L_\pi}$ to obtain the following inequality:
\begin{align*}
\dot{W} \leq&\frac{2}{(T-t)} W -  k \beta_1 \frac{T^2}{(T-t)^2} W.
\end{align*}
By integration, this yields:
\begin{align*}
W(t) \leq W(0) \frac{T^2}{(T-t)^2} \exp\bigg(-k \beta_1 \frac{Tt}{(T-t)}\bigg).
\end{align*}
Since $\lim_{t\rightarrow T} W(t) =0$, it follows that $lim_{t \rightarrow T} x(t)=x^\ast$ and $\lim_{t \rightarrow T} \hat{u}=u^\ast$. Furthermore, it can also be shown that $W(t)$ is strictly decreasing over the interval $[0,T)$ for any $W(0)$ if one selects $k\beta_1 T \geq 2$. This shows that one achieves prescribed-time stability of $x^\ast$ and $u^\ast$ which completes the proof.
\end{proof}

\subsection{Dual-mode Prescribed-time ESC Design}

Next, we propose a model free formulation of the proposed dual-mode target controller from the previous section.  The main task is to provide a suitable estimate of the ``gradient-like" term $L_g h$. 

The first element is the addition of a dither signal is added to the input $u$. The input is then given by:
\[u = u_c+ A\sin(\omega \tau) \]
where $A$ and $\omega$ are positive constants and $u_c$ is the feedback control action.  It is important to note that the dither signal is required to be in the $\tau$-timescale. In the $t$-timescale, the input can be written as:
\begin{align*}
u = u_c+ A\sin\bigg(\omega \frac{Tt}{T-t}\bigg).
\end{align*}
where $T>0$ is the prescribed-time.

The estimate of $L_gh$ is then obtained using an extremum seeking control approach. We let the estimation of $L_gh$, denoted by $\xi$, be the output of the low-pass filter in the $t$-timescale:
\begin{align*}
\frac{d\xi}{dt} = -\omega_l \frac{T^2}{(T-t)^2}\bigg(\xi - \frac{2}{A}\sin(\omega\tau)\frac{dy}{dt}\bigg)
\end{align*}
where $\omega_l$ is the bandwidth of the filter. In general, the term $\frac{dy}{dt}$ is replaced by an estimate. The preferred approach is to use a high pass filter. In the prescribed-time framework, the high pass filter is first posed in the slow $\tau$ timescale. This gives:
\begin{align*}
    \frac{d\eta}{d\tau} &= -(\omega_h\eta -y(\tau)) \\
    \nu(\tau) &= -\omega_h^2\eta + \omega_h y(\tau)
\end{align*}
where $\omega_h$ is a positive constant.
The dynamics of the derivative estimate $\nu(\tau)$ are given by:
\begin{align*}
\frac{d\nu}{d\tau} = -\omega_h \bigg(\nu(\tau) - \frac{dy}{d\tau}\bigg).
\end{align*}
Thus, the error signal $\tilde{\nu} = \nu(\tau) - \frac{dy}{d\tau}$ has dynamics described by:
\begin{align*}
\frac{d\tilde{\nu}}{d\tau} = -\omega_h \Tilde{\nu} + \frac{d^2y}{d\tau^2}.
\end{align*}
We can then  write the estimate in the original time scale as:
\begin{align*}
    \frac{d\eta}{dt} &= -\frac{T^2}{(T-t)^2} (\omega_h\eta -y(t)) \\
    \nu(t) &= -\omega_h^2\eta + \omega_h y(t)
\end{align*}
Correspondingly, the dynamics of $\nu$ can be written as:
\begin{align*}
\frac{d\nu}{dt} = -\omega_h \frac{T^2}{(T-t)^2}  \bigg(\nu(t) - \frac{dy}{d\tau}\bigg).
\end{align*}

The estimation of $\frac{dy}{dt}$ is written in the original time scale as:
\begin{align*}
\bar{\nu}(t)=\frac{T^2}{(T-t)^2} \nu(t) 
\end{align*}
which yields:
\begin{align*}
\bar{\nu}(t)= \frac{T^2}{(T-t)^2} \tilde{\nu}(t)  + \frac{T^2}{(T-t)^2} \frac{d y}{d\tau}
\end{align*}
or,
\begin{align*}
\bar{\nu}(t)= \frac{T^2}{(T-t)^2} \tilde{\nu}(t)  + \frac{d y}{dt}
\end{align*}

The estimate for $L_gh$ can be expressed as follows:
\begin{equation} \label{eq-4}
    \frac{d\xi}{dt} = -\omega_l\frac{T^2}{(T-t)^2} \bigg(\xi - \frac{2}{A}\sin(\omega\tau)\bar{\nu}\bigg). 
 \end{equation}
Given the estimate $\xi$, we can pose the proposed dual-mode controller as:
\begin{equation} \label{eq:escc}
\begin{aligned}
\dot{\hat{u}} = &-\frac{k}{\tau_I}\bigg( \frac{T^2}{(T-t)^2}\bigg) \xi \\
    u =& -k\bigg(1+ \frac{T^2}{(T-t)^2}\bigg) \xi + \hat{u} + A\sin{\omega\tau}.
    \end{aligned}
 \end{equation}

\section{Stability Analysis} \label{sec3}

The analysis of stability proceeds in two steps. First, we generate the averaged closed-loop system. The averaged system is generated by integrating the time varying dither signal terms over one period of the sinusoidal. In this case, the period of oscillation is $T=\frac{2 \pi}{\omega}$ in the $\tau$-time scale. This yields the following closed-loop system:
\begin{equation} \label{eq:avgsysPT}
	\begin{aligned} 	
		\dot{x}^a=&f(x^a) + g(x^a) u^a \\
		\dot{\xi}^a= & -\omega_l\bigg( \frac{T^2}{(T-t)^2}\bigg) \bigg(\xi^a - L_g h^a\bigg) \\
		\dot{\hat{u}}^a = &-\frac{k}{\tau_I} \bigg( \frac{T^2}{(T-t)^2}\bigg) \xi^a\\
		u^a = & -k\bigg(1+\frac{T^2}{(T-t)^2}\bigg) \xi^a + \hat{u}^a.
	\end{aligned}
\end{equation}

%

Before we proceed with the analysis, we consider one additional assumption concerning the closed-loop dynamics of $L_gh$ given by:
\begin{align} \label{eq:lghdyn}
\frac{d L_gh}{dt} = L_f L_g h - k L_g^2 h L_g h+ L_g^2 h \hat{u}.
\end{align}
\begin{assum} \label{assum:lghdyn}
The closed-loop dynamics \eqref{eq:lghdyn} are such that:
\begin{itemize}
\item{i)} the iterated Lie derivative term $L_g^2h$ is such that:
\begin{align*}
\beta_3 \leq L_g^2h \leq \beta_4,
\end{align*}
for some positive constants $0<\beta_3<\beta_4$.
\item{ii)} there exists a positive constant $L_3$ such that:
\begin{align*}
|L_f L_g h - k L_g^2 h L_g h+ L_g^2 h \hat{u} | \leq L_3 \| x- \pi(u^\ast)\| + L_4 \|\tilde{u}\|.
\end{align*}
\end{itemize}
\end{assum}

\begin{lem} \label{lem4}
Consider the nonlinear closed-loop system \eqref{eq:avgsysPT}. Let Assumptions \ref{assum:cost1}, \ref{assum:lgh}, \ref{assum:stab} and \ref{assum:lghdyn} be met. Then, for some positive constant $T$, the trajectories of the system are bounded and
\begin{align*} 
\lim_{t \rightarrow T} x(t)=x^\ast, \,\,\, \lim_{t\rightarrow T} \hat{u}(t) = u^\ast.
\end{align*}
where $x^\ast, \, u^\ast$ are the equilibrium conditions that minimize the cost function $h(x)$.
\end{lem}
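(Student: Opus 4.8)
The plan is to treat the averaged closed‑loop system \eqref{eq:avgsysPT} as a perturbation of the target system of Lemma \ref{lem3}: the two differ only in that the estimate $\xi^a$ replaces the true Lie derivative $L_g h$ in both the feedback $u^a$ and the integral action $\dot{\hat u}^a$. I would therefore introduce the estimation error $\tilde\xi = \xi^a - L_g h$ as an additional state. From \eqref{eq:avgsysPT} its dynamics are
\[
\dot{\tilde\xi} = -\omega_l\frac{T^2}{(T-t)^2}\tilde\xi - \frac{d L_g h}{dt},
\]
where $\frac{dL_g h}{dt}=L_f L_g h + u^a L_g^2 h$ is evaluated along the closed loop. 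Substituting $u^a=-k\big(1+\tfrac{T^2}{(T-t)^2}\big)(L_g h+\tilde\xi)+\hat u^a$, the forcing splits into the benign term $L_f L_g h - kL_g^2h\,L_g h + L_g^2 h\,\hat u^a$ governed by Assumption \ref{assum:lghdyn}(ii), together with the singular residuals $-k\tfrac{T^2}{(T-t)^2}L_g^2h\,L_g h$ and $-k\big(1+\tfrac{T^2}{(T-t)^2}\big)L_g^2 h\,\tilde\xi$.

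Next I would augment the Lyapunov function of Lemma \ref{lem3} with a scaled penalty on the error,
\[
W_a = \frac{\alpha}{v(t)}\big(h(x)-h(x^\ast)\big) + \frac{\beta}{2}\tilde u^2 + \frac{\gamma}{2 v(t)}\tilde\xi^2,\qquad v(t)=\frac{(T-t)^2}{T^2},
\]
with $\gamma>0$ free. Differentiating, the first two summands reproduce verbatim the computation of Lemma \ref{lem3} — yielding, after the same structure of constant choices, the bound $\tfrac{2}{T-t}W - k\beta_1\tfrac{T^2}{(T-t)^2}W$ for the corresponding part of $W_a$ — plus two new cross terms created by the replacement of $L_g h$ by $\xi^a$, namely $-\tfrac{\alpha k}{v}\big(1+\tfrac{T^2}{(T-t)^2}\big)L_g h\,\tilde\xi$ and $-\tfrac{\beta k}{\tau_I v}\tilde\xi\,\tilde u$. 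The $\tfrac{\gamma}{2v}\tilde\xi^2$ scaling is deliberate: since $-\dot v/v^2$ again produces the factor $\tfrac{2}{T-t}$, this summand contributes $\tfrac{2}{T-t}\cdot\tfrac{\gamma}{2v}\tilde\xi^2$ (folding into the desired $\tfrac{2}{T-t}W_a$), a strongly negative term $-\tfrac{\gamma\omega_l}{v^2}\tilde\xi^2$, and the forcing coupling $-\tfrac{\gamma}{v}\tilde\xi\,\tfrac{dL_g h}{dt}$.

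The core of the argument is a domination/Young's‑inequality step. I would first note that the constant selections in Lemma \ref{lem3} can be carried out with strict inequalities so as to \emph{retain} a negative margin $-\tfrac{\mu}{v}\|x-\pi(u^\ast)\|^2-\tfrac{\mu}{v}\tilde u^2$ for some $\mu>0$. Every cross term is then split by Young's inequality into a $\tilde\xi^2$-part and a part in $(h-h^\ast)$, $\|x-\pi(u^\ast)\|^2$ or $\tilde u^2$, using Assumption \ref{assum:lgh} to bound $(L_g h)^2\leq\beta_2(h-h^\ast)$, Assumption \ref{assum:lghdyn}(i) to bound $L_g^2h\leq\beta_4$, and Assumption \ref{assum:lghdyn}(ii) for the benign part of $\tfrac{dL_g h}{dt}$. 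By choosing the Young parameters to load each cross term onto its $\tilde\xi^2$-component, the $(h-h^\ast)$-parts — including the singular $O(1/v^2)$ residual $\tfrac{\gamma k}{v^2}L_g^2 h\,L_g h\,\tilde\xi$, which yields $O\!\big(\tfrac{1}{v^2}(h-h^\ast)\big)=O(\tfrac1v W_a)$ — are made small relative to the decay term $-k\beta_1\tfrac{T^2}{(T-t)^2}W_a$, and the $\|x-\pi(u^\ast)\|^2$- and $\tilde u^2$-parts are kept below $\mu/v$. All $\tilde\xi^2$-contributions (at orders $1/v$ and $1/v^2$) are then dominated by $-\tfrac{\gamma\omega_l}{v^2}\tilde\xi^2$ upon taking the filter bandwidth $\omega_l$ large enough, chosen last. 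This produces $\dot W_a\leq\tfrac{2}{T-t}W_a - c\tfrac{T^2}{(T-t)^2}W_a$ for some $c>0$, which integrates exactly as in Lemma \ref{lem3} to $W_a(t)\leq W_a(0)\tfrac{T^2}{(T-t)^2}\exp\!\big(-c\tfrac{Tt}{T-t}\big)\to0$ as $t\to T$. Since $W_a$ is lower bounded by a positive‑definite function of $(x-x^\ast,\tilde u,\tilde\xi)$, this gives boundedness of the trajectories together with $x\to x^\ast$, $\hat u\to u^\ast$ and $\xi^a\to L_g h(x^\ast)=0$.

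I expect the main obstacle to be precisely the handling of the forcing $\tfrac{dL_g h}{dt}$ in the error dynamics. Along the prescribed‑time closed loop this quantity is \emph{not} the one bounded by Assumption \ref{assum:lghdyn}(ii): it carries the extra gain $\tfrac{T^2}{(T-t)^2}$ that diverges as $t\to T$, so one must isolate these singular pieces and verify that the doubly‑weighted negative term $-\tfrac{\gamma\omega_l}{v^2}\tilde\xi^2$ supplied by the filter, together with the convexity‑type bounds of Assumptions \ref{assum:cost1}–\ref{assum:lgh}, dominate them \emph{uniformly} on $[0,T)$. Ensuring that the slack needed for the new $\|x-\pi(u^\ast)\|^2$ and $\tilde u^2$ residuals is genuinely compatible with the interlocked constant relations inherited from Lemma \ref{lem3} is where the bookkeeping is most delicate.
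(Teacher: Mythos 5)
Your proposal follows essentially the same route as the paper: the identical augmented Lyapunov function $W=\frac{\alpha}{v(t)}(h-h^\ast)+\frac{\beta}{2}\tilde u^2+\frac{\gamma}{2v(t)}(\xi-L_gh)^2$, the same splitting of the $\frac{dL_gh}{dt}$ forcing into the benign part covered by Assumption \ref{assum:lghdyn} and the singular $O(1/v^2)$ residuals, the same Young's-inequality bookkeeping with $\omega_l$ chosen last to absorb the $\tilde\xi^2$ contributions, and the same final differential inequality integrated as in Lemma \ref{lem3}. The obstacle you flag (the singular gain inside the closed-loop $\frac{dL_gh}{dt}$) is exactly what the paper handles by isolating the $\frac{\gamma k}{v^2}(\xi-L_gh)L_g^2h\,L_gh$ and $\frac{\gamma k}{v}(1+\frac1v)L_g^2h(\xi-L_gh)^2$ terms and dominating them with $-\frac{\gamma\omega_l}{v^2}(\xi-L_gh)^2$, so your plan is consistent with the paper's proof.
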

\begin{proof}
We proceed to the analysis of the prescribed-time average system \eqref{eq:avgsysPT}. Following the proof of Theorem \ref{theo1}, we pose the candidate Lyapunov function:
\begin{align*}
W=\frac{\alpha}{v(t)} (h(x)-h(x^\ast))+ \frac{\beta}{2} \tilde{u}^2 + \frac{\gamma}{2 v(t)} (\xi - L_g h)^2. 
\end{align*}
Upon differentiation with respect to $t$, we get:
\begin{align*}
\dot{W}=&-\frac{\alpha \dot{v}}{v(t)^2} (h(x)-h(x^\ast))- \frac{\gamma \dot{v}}{2v(t)^2} (\xi - L_g h)^2 \\&+ \frac{\alpha}{v(t)} \bigg( L_f h - k\left(1+\frac{1}{v(t)}\right) L_gh \xi + L_g h \hat{u} \bigg)\\
&- \frac{\beta}{v(t)} \xi \tilde{u} -\frac{\gamma \omega_l}{v(t)^2} (\xi - L_g h)^2 - \frac{1}{v(t)}\gamma(\xi - L_g h) \frac{d L_g h}{dt}.
\end{align*}
This can be rewritten as follows:
\begin{align*}
\dot{W}=&-\frac{\alpha \dot{v}}{v(t)^2} (h(x)-h(x^\ast))- \frac{\gamma \dot{v}}{2 v(t)^2} (\xi - L_g h)^2  \\&+ \frac{\alpha}{v(t)} \bigg( L_f h - k L_gh^2 + L_g h \hat{u} \bigg)- \frac{\beta k}{\tau_I v(t)} L_g h\tilde{u} \\
& - \frac{\alpha k}{v(t)}\left(1+\frac{1}{v(t)}\right) L_g h (\xi-L_g h) -  \frac{\alpha k}{v(t)^2} L_gh^2 \\
&- \frac{\beta k}{\tau_I v(t)} (\xi-L_gh) \tilde{u}  -\frac{\gamma \omega_l}{v(t)^2} (\xi - L_g h)^2 \\&-\frac{\gamma}{v(t)}(\xi - L_g h) \bigg( L_f L_g h -k L_g^2 h L_g h + L_g^2 h \hat{u}\bigg) \\
&+\frac{\gamma k}{v(t)^2} (\xi-L_gh) L_g^2 h L_g h \\&+ \frac{\gamma k }{v(t)} \bigg(1+\frac{1}{v(t)}\bigg) L_g^2 h (\xi-L_g h)^2.
\end{align*}
Now, we can write:
\begin{align*}
-\frac{\alpha k}{v(t)} \left(1+\frac{1}{v(t)}\right) L_g h (\xi-L_gh) \leq \frac{2 \alpha k}{v(t)^2}|L_gh||\xi-L_gh| 
\end{align*}
where we used the fact that $\frac{1}{v(t)}\geq 1$ for $t\in[0,T)$.  Similarly, we have:
\begin{align*}
\frac{ \gamma k}{v(t)} \bigg(1+\frac{1}{v(t)}\bigg) L_g^2 h (\xi-L_g h)^2 \leq \frac{2 k \gamma}{v(t)^2} L_g^2 h (\xi-L_gh)^2.
\end{align*}
This yields the following inequality:
\begin{align*}
\dot{W}\leq&-\frac{\alpha \dot{v}}{v(t)^2} (h(x)-h(x^\ast))- \frac{\gamma \dot{v}}{2 v(t)^2} (\xi - L_g h)^2  \\
&+ \frac{\alpha}{v(t)} \bigg( L_f h - k L_gh^2 + L_g h \hat{u} \bigg)- \frac{\beta k}{\tau_I v(t)} L_g h\tilde{u} \\
& + \frac{2 \alpha k }{v(t)^2}| L_g h| |\xi-L_g h| -  \frac{\alpha k}{v(t)^2} L_gh^2 \\
&- \frac{\beta k}{\tau_I v(t)} (\xi-L_gh) \tilde{u}  -\frac{\gamma \omega_l}{v(t)^2} (\xi - L_g h)^2 \\
&-\frac{\gamma}{v(t)}(\xi - L_g h) \bigg( L_f L_g h -k L_g^2 h L_g h + L_g^2 h \hat{u}\bigg) \\
&+\frac{\gamma k}{v(t)^2} (\xi-L_gh) L_g^2 h L_g h \\&+  \frac{2\gamma k}{v(t)^2} L_g^2 h (\xi-L_g h)^2.
\end{align*}
Proceeding as in Lemma \ref{lem3}, we can write:
\begin{align*}
\dot{W}\leq&-\frac{\alpha \dot{v}}{v(t)^2} (h(x)-h(x^\ast))- \frac{\gamma \dot{v}}{2 v(t)^2} (\xi - L_g h)^2  \\
&+ \frac{1}{v(t)} \bigg(\alpha \alpha_1 - \frac{\beta k L_2^2}{2 \alpha_2 \tau_I}\bigg) \| x- \pi(u^\ast)\|^2\\
&+ \frac{L_\pi}{v(t)} \bigg( \alpha \alpha_1 - \frac{\beta k L_2^2}{2 \alpha_2 \tau_I}\bigg) \| \tilde{u}\|^2 - \frac{\beta k \alpha_2}{2 \tau_I v(t)} \|\tilde{u}\|^2 \\
& + \frac{2 \alpha k}{v(t)^2} | L_g h| |\xi-L_g h| -  \frac{\alpha k}{v(t)^2} L_gh^2 \\
&- \frac{\beta k}{\tau_I v(t)} (\xi-L_gh) \tilde{u}  -\frac{\gamma \omega_l}{v(t)^2} (\xi - L_g h)^2 \\
&-\frac{\gamma}{v(t)}(\xi - L_g h) \bigg( L_f L_g h -k L_g^2 h L_g h + L_g^2 h \hat{u}\bigg) \\
&+\frac{\gamma k }{v(t)^2}  (\xi-L_gh) L_g^2 h L_g h \\&+  \frac{2\gamma k}{v(t)^2} L_g^2 h (\xi-L_g h)^2.
\end{align*}
We apply Young's inequality to the indeterminate terms to obtain the following inequality:
\begin{align*}
\dot{W}\leq&-\frac{\alpha \dot{v}}{v(t)^2} (h(x)-h(x^\ast))- \frac{\gamma \dot{v}}{2 v(t)^2} (\xi - L_g h)^2  \\&+ \frac{1}{v(t)} \bigg(\alpha \alpha_1 - \frac{\beta k L_2^2}{2 \alpha_2 \tau_I}\bigg) \| x- \pi(u^\ast)\|^2\\&
+ \frac{L_\pi}{v(t)} \bigg( \alpha \alpha_1 - \frac{\beta k L_2^2}{2 \alpha_2 \tau_I}\bigg) \| \tilde{u}\|^2 - \frac{\beta k \alpha_2}{2 \tau_I v(t)} \|\tilde{u}\|^2 \\
& -\frac{\gamma \omega_l}{v(t)^2} (\xi - L_g h)^2  -  \frac{\alpha}{v(t)^2} L_gh^2 \\
&+ \frac{\alpha k}{k_8 v(t)^2} L_g h^2  + \frac{\alpha k k_8}{v(t)^2} (\xi-L_g h)^2 \\
&+ \frac{\beta k}{2\tau_I  k_9v(t)} \tilde{u} + \frac{\beta k k_9}{2 \tau_I v(t)} (\xi-L_gh)^2   \\
& +\frac{\gamma  k_{10}  L_3 }{2v(t)}(\xi - L_g h)^2+\frac{\gamma L_3}{2 k_{10} v(t)}\| x-\pi(u^\ast) \|^2 \\
&+\frac{  \gamma k_{11} L_4 }{2v(t)}(\xi - L_g h)^2+\frac{\gamma L_4}{2 k_{11} v(t)} \tilde{u}^2 \\
&+\frac{\gamma k  \beta_4 k_{12} }{2 v(t)^2} (\xi-L_gh)^2 + \frac{\gamma k \beta_4}{2 k_{12} v(t)^2}L_g h^2 \\
&+  \frac{2\gamma k \beta_4}{v(t)^2} (\xi-L_g h)^2.
\end{align*}
for positive constants $k_8$, $k_9$, $k_{10}$, $k_{11}$ and $k_{12}$.  Following the proof of Lemma \ref{lem1}, we can write the following after collecting similar terms:
\begin{align*}
\dot{W}\leq&-\frac{\alpha \dot{v}}{v(t)^2} (h(x)-h(x^\ast))- \frac{\gamma \dot{v}}{2 v(t)^2} (\xi - L_g h)^2  \\
&- \bigg(\frac{k_2 \alpha_1}{v(t)}-\frac{\gamma L_3}{2 k_{10} v(t)}\bigg) \| x- \pi(u^\ast)\|^2\\
& - \bigg( \frac{L_\pi k_2 \alpha_1}{v(t)} - \frac{\beta k}{2 \tau_I k_9v(t)} - \frac{\gamma L_4}{2 k_{11} v(t)}\bigg)  \tilde{u}^2\\
&-\bigg(  \frac{\alpha k}{v(t)^2} -\frac{\alpha k}{k_8 v(t)^2}-\frac{\gamma k \beta_4}{2 k_{12} v(t)^2} \bigg)  L_gh^2 \\
& -\bigg( \frac{\gamma \omega_l}{v(t)^2}-\frac{\alpha k k_8}{v(t)^2}- \frac{\beta k k_9}{2 \tau_I v(t)} -\frac{  \gamma L_3 k_{10} }{2v(t)} \\&\hspace{0.5in}-\frac{  \gamma L_4 k_{11} }{2v(t)} -\frac{\gamma k  \beta_4 k_{12} }{2 v(t)^2} -\frac{2\gamma k \beta_4}{v(t)^2}  \bigg)  (\xi-L_g h)^2.
\end{align*}
Since $\frac{1}{v(t)}\geq 1$, this can be written as:
\begin{align*}
\dot{W}\leq&-\frac{\alpha \dot{v}}{v(t)^2} (h(x)-h(x^\ast))- \frac{\gamma \dot{v}}{2 v(t)^2} (\xi - L_g h)^2  \\&- \frac{1}{v(t)}\bigg(k_2 \alpha_1-\frac{\gamma L_3}{2 k_{10}}\bigg) \| x- \pi(u^\ast)\|^2\\& - \frac{1}{v(t)}\bigg( L_\pi k_2 \alpha_1 - \frac{\beta k}{2 \tau_I k_9} - \frac{\gamma L_4}{2 k_{11}}\bigg)  \tilde{u}^2\\
&-\frac{1}{v(t)^2}\bigg(  \alpha k -\frac{\alpha k}{k_8}-\frac{\gamma k \beta_4}{2 k_{12}} \bigg)  L_gh^2 \\
& -\frac{1}{v(t)^2}\bigg( \gamma \omega_l-\alpha k k_8- \frac{\beta k k_9}{2\tau_I} -\frac{ \gamma L_3 k_{10}  }{2} \\
&\hspace{0.5in}-\frac{ \gamma L_4k_{11}  }{2} -\frac{\gamma k  \beta_4 k_{12} }{2 } -2 \gamma k \beta_4  \bigg)  (\xi-L_g h)^2.
\end{align*}
We let $k_8=4$, $k_9 = \frac{2 \beta}{\tau_I  L_\pi \alpha_1 k_2}$, $k_{10} = \frac{\gamma L_3}{k_2 \alpha_1}$, $k_{11}=\frac{2 \gamma L_4}{L_\pi k_2 \alpha_1}$ and $k_{12}=\frac{2 \gamma \beta_4}{\alpha}$. We also assign:
\begin{align*}
w_l = k_{13} + \frac{1}{\gamma} \bigg( k \alpha k_8&+ \frac{\beta k k_9}{2 \tau_I} +\frac{ k_{10} \gamma L_3 }{2}\\&+\frac{ k_{11} \gamma L_4 }{2} +\frac{\gamma k  \beta_4 k_{12} }{2 } -2\beta_4 k\gamma  \bigg). 
\end{align*}
 This yields:
\begin{align*}
\dot{W}\leq&-\frac{\alpha \dot{v}}{v(t)^2} (h(x)-h(x^\ast))- \frac{\gamma \dot{v}}{2 v(t)^2} (\xi - L_g h)^2  \\&- \frac{k_2 \alpha_1}{2v(t)}\| x- \pi(u^\ast)\|^2 - \frac{L_\pi k_2 \alpha_1}{4v(t)} \tilde{u}^2\\
&-\frac{\alpha k}{4v(t)^2} L_gh^2 -\frac{\gamma k_{13}}{v(t)^2}  (\xi-L_g h)^2.
\end{align*}
By Assumption \ref{assum:lgh}, we finally obtain:
\begin{align*}
\dot{W}\leq&-\frac{\alpha \dot{v}}{v(t)^2} (h(x)-h(x^\ast))- \frac{\gamma \dot{v}}{2 v(t)^2} (\xi - L_g h)^2  \\&- \frac{k_2 \alpha_1}{2v(t)}\| x- \pi(u^\ast)\|^2 - \frac{L_\pi k_2 \alpha_1}{4v(t)} \tilde{u}^2\\
&-\frac{\alpha k}{4\beta_1 v(t)^2} (h(x)-h(x^\ast)) -\frac{\gamma k_{13}}{v(t)^2}  (\xi-L_g h)^2.
\end{align*}
This can be written as:
\begin{align*}
\dot{W}\leq&-\frac{\dot{v}}{v(t)}W+ \frac{\beta \dot{v}}{2 v(t)} \tilde{u}^2  - \frac{k_2 \alpha_1}{2v(t)}\| x- \pi(u^\ast)\|^2 \\& - \bigg(\frac{L_\pi k_2 \alpha_1}{4v(t)}-\frac{\alpha k \beta \gamma}{8 \beta_1 v(t)} \bigg) \tilde{u}^2-\frac{k}{4\beta_1 v(t)} W \\&-\bigg(\frac{\gamma k_{13}}{v(t)^2} -\frac{ \gamma \alpha k }{8 \beta_1 v(t)^2}\bigg) (\xi-L_g h)^2
\end{align*}
We note that $\frac{\dot{v}}{v(t)}=-\frac{2}{T-t}$. Choosing $k_{13}\geq \frac{\alpha k}{8 \beta_1}$ and $k_2\geq \frac{\alpha k \beta \gamma}{2 L_\pi \alpha_1}$, the last inequality leads to:
\begin{align*}
\dot{W}\leq&-\frac{\dot{v}}{v(t)}W -\frac{\alpha k}{4\beta_1 v(t)} W.
\end{align*}
Proceeding as in Lemma \ref{lem3}, we conclude that for $\frac{\alpha kT}{4\beta_1}>2$, the closed-loop system reaches optimal conditions in prescribed time $T$,  such that $\lim_{t\rightarrow T} x(t)=x^\ast$, $\lim_{t \rightarrow T} \hat{u}(t)=u^\ast$ and $\lim_{t\rightarrow T} \xi(t)=0$. This completes the proof.
\end{proof}

\subsection{Stability of the nominal system}

First, we consider the nominal closed-loop system:
\begin{equation}\label{eq:nomcl1}
	\begin{aligned}
		\dot{x} = & f(x) - k\bigg( 1+ \frac{T^2}{(T-t)^2}\bigg) g(x) \xi + g(x) \hat{u} + g(x) Aa\sin(\omega \tau)\\
		\dot{\hat{u}} = & -\frac{k}{\tau_I} \frac{T^2}{(T-t)^2}\xi \\
		\dot{\xi} = & -\omega_l \frac{T^2}{(T-t)^2} \bigg( \xi - \frac{2}{a} \sin(\omega \tau)\frac{T^2}{(T-t)^2}(-\omega_h^2 \eta+\omega_h y)\bigg)\\
		\dot{\eta} = & - \frac{T^2}{(T-t)^2} (\omega_h \eta - y(t)).
	\end{aligned}
\end{equation}
We define $\tilde{\eta}= \eta-y/\omega_h$. Using the notation defined above, we can rewrite the closed-loop dynamics as:
\begin{equation}\label{eq:nomcl2}
	\begin{aligned}
		\dot{x} = & f(x) - k \bigg( 1+ \frac{T^2}{(T-t)^2}\bigg) g(x) L_g h+ g(x) \hat{u} \\&- k  \bigg( 1+ \frac{T^2}{(T-t)^2}\bigg) g \tilde{\xi} +  g(x) A \sin(\omega \tau)\\
		\dot{\hat{u}} = & -\frac{k}{\tau_I}\frac{T^2}{(T-t)^2} L_g h - \frac{k}{\tau_I}\frac{T^2}{(T-t)^2} \tilde{\xi} \\
		\dot{\tilde{\xi}} = & -\omega_l\frac{T^2}{(T-t)^2} \tilde{\xi} - \frac{d L_gh}{dt} \\
		&-\omega_l\bigg(L_gh- \frac{2}{a} \sin(\omega \tau)\frac{T^2}{(T-t)^2} (-\omega_h^2 \tilde{\eta}) \bigg)\\
		\dot{\tilde{\eta}} = & -\omega_h \frac{T^2}{(T-t)^2} \tilde{\eta} -\frac{1}{\omega_h}\dot{y}.
	\end{aligned}
\end{equation}
Next, we consider the dynamics of $\tilde{\nu}$ in the $\tau$ timescale:
\begin{align*}
\frac{d\tilde{\nu}}{d\tau} = & -\omega_h\tilde{\nu} -\frac{d^2 y}{d\tau^2}
\end{align*}
Since we have, $\tilde{\nu}=-\omega_h^2\tilde{\eta} - \frac{dy}{d\tau}$, then one can write:
\begin{align*}
		\dot{x} = & f(x) - k \bigg( 1+ \frac{T^2}{(T-t)^2}\bigg) g(x) L_g h+ g(x) \hat{u} \\&- k  \bigg( 1+ \frac{T^2}{(T-t)^2}\bigg) g \tilde{\xi} +  g(x) A \sin(\omega \tau)\\
		\dot{\hat{u}} = & -\frac{k}{\tau_I}\frac{T^2}{(T-t)^2} L_g h - \frac{k}{\tau_I}\frac{T^2}{(T-t)^2} \tilde{\xi} \\
		\dot{\tilde{\xi}} = & -\omega_l\frac{T^2}{(T-t)^2} \tilde{\xi} - \frac{d L_gh}{dt} \\
		&-\omega_l\bigg(L_gh- \frac{2}{a} \sin(\omega \tau)\frac{T^2}{(T-t)^2} (\tilde{\nu}+\frac{dy}{d\tau}) \bigg)\\
		\dot{\tilde{\nu}} = & -\omega_h \frac{T^2}{(T-t)^2} \tilde{\nu} -\frac{T^2}{(T-t)^2}\frac{d^2 y}{d\tau^2}
\end{align*}
We notice that:
\begin{align*}
\frac{T^2}{(T-t)^2}\frac{d^2 y}{d\tau^2}= \frac{d^2 y}{dt d\tau}=\frac{d^2y}{dt^2} \frac{dt}{d\tau}.
\end{align*}
This yields:
\begin{align*}
		\dot{x} = & f(x) - k \bigg( 1+ \frac{T^2}{(T-t)^2}\bigg) g(x) L_g h+ g(x) \hat{u} \\&- k  \bigg( 1+ \frac{T^2}{(T-t)^2}\bigg) g \tilde{\xi} +  g(x) A \sin(\omega \tau)\\
		\dot{\hat{u}} = & -\frac{k}{\tau_I}\frac{T^2}{(T-t)^2} L_g h - \frac{k}{\tau_I}\frac{T^2}{(T-t)^2} \tilde{\xi} \\
		\dot{\tilde{\xi}} = & -\omega_l\frac{T^2}{(T-t)^2} \tilde{\xi} - \frac{d L_gh}{dt} -\omega_l\frac{T^2}{(T-t)^2} \bigg(L_gh\\
		&\hspace{0.25in}-\frac{2}{a}\sin(\omega_h \tau) \dot{y}- \frac{2}{a} \sin(\omega \tau)\frac{T^2}{(T-t)^2} \tilde{\nu} \bigg)\\
		\dot{\tilde{\nu}} = & -\omega_h \frac{T^2}{(T-t)^2} \tilde{\nu} - \frac{(T-t)^2}{T^2}\ddot{y}
\end{align*}
Using a standard argument, we let $\omega_h\rightarrow \infty$ to generate the slow dynamics:
\begin{align*}
		\dot{x} = & f(x) - k \bigg( 1+ \frac{T^2}{(T-t)^2}\bigg) g(x) L_g h+ g(x) \hat{u} \\&- k  \bigg( 1+ \frac{T^2}{(T-t)^2}\bigg) g \tilde{\xi} +  g(x) A \sin(\omega \tau)\\
		\dot{\hat{u}} = & -\frac{k}{\tau_I}\frac{T^2}{(T-t)^2} L_g h - \frac{k}{\tau_I}\frac{T^2}{(T-t)^2} \tilde{\xi} \\
		\dot{\tilde{\xi}} = & -\omega_l\frac{T^2}{(T-t)^2} \tilde{\xi} - \frac{d L_gh}{dt} -\omega_l\frac{T^2}{(T-t)^2} \bigg(L_gh\\
		&\hspace{0.25in}-\frac{2}{a}\sin(\omega_h \tau) \dot{y}\bigg)
\end{align*}

Letting $K(t) = k \bigg( 1+ \frac{T^2}{(T-t)^2}\bigg)$, $\tilde{u}=\hat{u}-u^\ast$ and $\tilde{x}=x-x^\ast$, \eqref{eq:nomcl1} can be rewritten as:
\begin{align*}
		\dot{\tilde{x}} = & f(x) - K(t) g(x) L_g h+ g(x) \hat{u} - K(t) g \tilde{\xi} +  g(x) A \sin(\omega \tau)\\
		\dot{\tilde{u}} = & -\frac{k}{\tau_I}\frac{T^2}{(T-t)^2} L_g h - \frac{k}{\tau_I}\frac{T^2}{(T-t)^2} \tilde{\xi} \\
		\dot{\tilde{\xi}} = & -\omega_l\frac{T^2}{(T-t)^2} \tilde{\xi} -\omega_l\frac{T^2}{(T-t)^2} \bigg(L_gh\\
		&\hspace{0.75in}-\frac{2}{a}\sin(\omega_h \tau)  \dot{y} \bigg)- \frac{d L_gh}{dt}
\end{align*}
or,
\begin{align*}
		\dot{\tilde{x}} = & f(x) - K(t) g(x) L_g h+ g(x) \hat{u} - K(t) g \tilde{\xi} +  g(x) A \sin(\omega \tau)\\
		\dot{\tilde{u}} = & -\frac{k}{\tau_I}\frac{T^2}{(T-t)^2} L_g h - \frac{k}{\tau_I}\frac{T^2}{(T-t)^2} \tilde{\xi} \\
		\dot{\tilde{\xi}} = & -\omega_l\frac{T^2}{(T-t)^2} \bigg(\tilde{\xi} +(1-2 sin^2(\omega \tau)) L_g h \\
		&-\frac{2}{a}\sin(\omega \tau)(L_f h + L_g h \hat{u} - K(t) (L_gh)^2+K(t) L_g h\tilde{\xi}\bigg) \\
		&- (L_f L_g h + L_g^2 h \hat{u} - K(t) L_g^2 h L_g h + K(t) L_g^2 h \tilde{\xi} \\ & \hspace{0.5in} + L_g^2h a \sin(\omega t))
\end{align*}
This leads to:
\begin{align*}
		\dot{\tilde{x}} = & f(x) - K(t) g(x) L_g h+ g(x) \hat{u} - K(t) g \tilde{\xi} +  g(x) A \sin(\omega \tau)\\
		\dot{\tilde{u}} = & -\frac{k}{\tau_I}\frac{T^2}{(T-t)^2} L_g h - \frac{k}{\tau_I}\frac{T^2}{(T-t)^2} \tilde{\xi} \\
		\dot{\tilde{\xi}} = & -\omega_l\frac{T^2}{(T-t)^2} \tilde{\xi} + \frac{T^2}{(T-t)^2} B_\xi(\tau,\tilde{x},\tilde{u},\tilde{\xi})  \\
		&- (L_f L_g h + L_g^2 h \hat{u} - K(t) L_g^2 h L_g h + K(t) L_g^2 h \tilde{\xi}).
\end{align*}
where 
\begin{align*} B_\xi&(\tau,\tilde{x},\tilde{u},\tilde{\xi})=(1-2 \sin^2(\omega \tau )) L_g h- \frac{2}{a} \sin(\omega \tau ) (L_fh \\& + L_gh \hat{u}- K(t) (L_g h)^2 + K(t) L_gh \tilde{\xi} - \frac{a^2}{2} L_g^2h  ).
\end{align*}

We let $\bm{X}=\begin{bmatrix} \tilde{x}^T &  \tilde{u}^T & \tilde{\xi}^T \end{bmatrix}^T $ and write the dynamics of the nominal system as:
\begin{align} \label{eq:nomsys}
\dot{\bm{X}} = F(\bm{X}) + \bm{B}(t,\bm{X})
\end{align}
where $F(\bm{X}) = \begin{bmatrix} F_1(\bm(X)) & F_2(\bm{X}) & F_3(\bm{X}) \end{bmatrix}^T$ with
\begin{align*}
F_1(\bm{X}) =&  f(x) - K(t) g(x) L_g h+ g(x)u^\ast + g(x) \tilde{u} - K(t) g(x) \tilde{\xi} \\
F_2(\bm{X}) =& -\frac{k}{\tau_I}\frac{T^2}{(T-t)^2} L_g h - \frac{k}{\tau_I}\frac{T^2}{(T-t)^2} \tilde{\xi} \\
F_3(\bm{X}) =&  -\omega_l\frac{T^2}{(T-t)^2} \tilde{\xi}  \\
		&- (L_f L_g h + L_g^2 h \hat{u} - K(t) L_g^2 h L_g h + K(t) L_g^2 h \tilde{\xi})
\end{align*}
and
$$\bm{B}(t,\bm{X}) = \begin{bmatrix} g(x) a \sin(\omega \tau) \\ 0 \\ \frac{T^2}{(T-t)^2}  B_\xi(\tau,\tilde{x},\tilde{u},\tilde{\xi})\end{bmatrix}.$$

First, we note that by the analysis in the brief section, there is a positive definite function $V(\bm{X})$ such that $$\frac{\partial V}{\partial \bm{X}} F(\bm{X}) \leq \frac{2}{T-t}V-\frac{\alpha k_1 T^2}{\beta (T-t)^2} V.$$ Thus, it follows that if $\frac{\alpha k_1 T}{\beta} = 2 + k^\ast T$ for some positive constant $k^\ast$ then we obtain:
$$\frac{\partial V}{\partial \bm{X}} F(\bm{X}) \leq -\frac{k^\ast T^2}{ (T-t)^2} V(\bm{X}).$$ 

We then consider the trajectories of the system in the $\tau$ timescale. This yields the system:
\begin{align*}
\frac{d\bm{X}}{d\tau} = \bar{F}(\bm{X})+\bar{\bm{B}}(\tau,\bm{X})
\end{align*}
where $\bar{F}(\bm{X})=\frac{T^2}{(T+\tau)^2} F(\bm{X})$ and $$\bar{\bm{B}}(\tau,\bm{X}) = \begin{bmatrix} \frac{T^2}{(T+\tau)^2} g(x) a \sin(\omega \tau) \\ 0 \\ B_\xi(\tau,\tilde{x},\tilde{u},\tilde{\xi})\end{bmatrix}.$$ 

Next, we note that the functions $B_1(\tau,\bm{X})=g(x)a\sin(\omega \tau)$ and $B_\xi(\tau, \tilde{x},\tilde{u}, \tilde{\xi})$ is $T$-periodic, locally Lipschitz in $\bm{X}$ with Lipschitz constant $L$ uniformly in $t$ and $|B_1(t,0)|= |g(x^\ast) a \sin(\omega \tau)| \leq c$ and $ | B_\xi(\tau,0,0,0) | \leq c$  for a positive constant $c$.

As in \cite{AEYELS19991091}, \cite{PEUTEMAN2011192} and \cite{teel1999semi}, let $L$ denote the Lipschitz constant for $\bar{B}(\tau,\bm{X})$, $F(\bm{X})$ and $\frac{\partial V}{\partial \bm{X}}$ over some compact set $D$. Since $\frac{T^2}{(T+\tau)^2} \leq 1$ , $\forall \tau >0$,  we can write:
\begin{align} 
\bigg| \frac{\partial V}{\partial \bm{X}}(\bm{X}_1)\bar{\bm{B}}(\tau,\bm{X}_1)-&\frac{\partial V}{\partial \bm{X}}(\bm{X}_2)\bar{\bm{B}}(\tau,\bm{X}_2) \bigg|
\leq \nonumber \\& (L\|\bm{X}_1\| + L \|\bm{X}_2\| + c) L \| \bm{X}_1-\bm{X}_2\|. \label{eq:dvdxL}
\end{align}
for any $\bm{X}_1$, $\bm{X}_2  \in D$. 
For the differential equation, one obtains:
\begin{align*} 
\| \bm{X}(\tau)-\bm{X}(\tau_0)\| \leq& \int_{\tau_0}^{\tau} \bigg(\|\bar{F}(\bm{X}(\sigma))-\bar{F}(\bm{X}(\tau_0))\| \\
&+\| \bm{B}(\sigma, \bm{X}(\sigma)-\bm{B}(\sigma,\bm{X}(\tau_0)\| \\
&+\| \bm{B}(\sigma, \bm{X}(\tau_0)-\bm{B}(\sigma,0)\| \\
&+ \|\bar{F}(\bm{X}(\tau_0))\| + \|\bm{B}(\sigma,0)\|\bigg)d\sigma
\end{align*}
which gives:
\begin{align*} 
\| \bm{X}(\tau)-\bm{X}(\tau_0)\| \leq \int_{\tau_0}^{\tau} (2L\|&\bm{X}(\sigma)-\bm{X}(\tau_0)\| \\
&+2L\|\bm{X}(\tau_0)\|  +c)d\sigma.
\end{align*}
By the Gronwall lemma, we get:
\begin{align*}
\| \bm{X}(\tau)-\bm{X}(\tau_0)\|  \leq \left( \|\bm{X}(\tau_0)\| + \frac{c}{2L} \right) (e^{2L(\tau-\tau_0)} - 1)
\end{align*}
In same way, we can show that:
\begin{align*}
\| \bm{X}(\tau)\| \leq  \|\bm{X}(\tau_0)\|e^{2L(\tau-\tau_0)} + \frac{c}{2L}  (e^{2L(\tau-\tau_0)} - 1).
\end{align*}

We consider a ball $B_\varepsilon$ where $\varepsilon>0$ is chosen such that $B_\sigma \subset D$. Let $
\sigma < \varepsilon$. Based on the last statement, there is a positive constant $\delta>0$ small enough such that 
\begin{align*}
 \| \bm{X}(\tau_0) \|^2 \leq\frac{\alpha_2}{\alpha_1}\sigma  \Rightarrow  \|\bm{X}(\tau) \|^2 \leq 2\frac{\alpha_2}{\alpha_1} \sigma 
\end{align*}
$\forall \tau \in [\tau_0,\, \tau_0+\delta]$.

We consider an increasing sequence of times $\tau_k$ for $k \in \mathbb{N}$ such that $\tau_{k+1}-\tau_{k}=\delta$ and we compute the change of the Lyapunov function $V$ at each time $\tau_k$. This yields:
\begin{align*}
V(\bm{X}&(\tau_{k+1}))-V(\bm{X}(\tau_k)) \\
& =\int_{\tau_k}^{\tau_{k+1}} \frac{\partial V}{\partial \bm{X}}(\bm{X}(\sigma))(\bar{F}(\bm{X}(\sigma)+\bar{\bm{B}}(\sigma,\bm{X}(\sigma)) d\sigma \\
& \leq - k^\ast \int_{\tau_k}^{\tau_{k+1}}  \| \bm{X}(\sigma)\|^2 d\sigma \\ 
&\hspace{0.5in} + \int_{\tau_k}^{\tau_{k+1}} \bigg(\frac{\partial V}{\partial \bm{X}}(\bm{X}(\sigma))\bar{\bm{B}}(\sigma,\bm{X}(\sigma))\bigg) d\sigma
\end{align*} 
This can be rewritten as:
\begin{align*}
V(\bm{X}&(\tau_{k+1}))-V(\bm{X}(\tau_k)) \\
& =\int_{\tau_k}^{\tau_{k+1}} \frac{\partial V}{\partial \bm{X}}(\bm{X}(\sigma))(\bar{F}(\bm{X}(\sigma)+\bar{\bm{B}}(\sigma,\bm{X}(\sigma)) d\sigma \\
& \leq - k^\ast \int_{\tau_k}^{\tau_{k+1}}  \| \bm{X}(\sigma)\|^2 d\sigma \\ 
&\hspace{0.25in} + \int_{\tau_k}^{\tau_{k+1}} \bigg(\frac{\partial V}{\partial \bm{X}}(\bm{X}(\sigma))\bar{\bm{B}}(\sigma,\bm{X}(\sigma)) \\
&\hspace{0.25in} -\frac{\partial V}{\partial \bm{X}}(\bm{X}(\tau_k))\bar{\bm{B}}(\tau_k,\bm{X}(\tau_k))\bigg) d\sigma \\
& +\int_{\tau_k}^{\tau_{k+1}} \frac{\partial V}{\partial \bm{X}}(\bm{X}(\tau_k))\bar{\bm{B}}(\tau_k,\bm{X}(\tau_k)) d\sigma
\end{align*} 
or,
\begin{align*}
V(\bm{X}&(\tau_{k+1}))-V(\bm{X}(\tau_k)) \\
& =\int_{\tau_k}^{\tau_{k+1}} \frac{\partial V}{\partial \bm{X}}(\bm{X}(\sigma))(\bar{F}(\bm{X}(\sigma)+\bar{\bm{B}}(\sigma,\bm{X}(\sigma)) d\sigma \\
& \leq - k^\ast \int_{\tau_k}^{\tau_{k+1}}  \| \bm{X}(\sigma)\|^2 d\sigma \\ 
&\hspace{0.25in} + \int_{\tau_k}^{\tau_{k+1}} \bigg(\frac{\partial V}{\partial \bm{X}}(\bm{X}(\sigma))\bar{\bm{B}}(\sigma,\bm{X}(\sigma)) \\
&\hspace{0.25in} -\frac{\partial V}{\partial \bm{X}}(\bm{X}(\tau_k))\bar{\bm{B}}(\tau_k,\bm{X}(\tau_k))\bigg) d\sigma \\
& +\int_{\tau_k}^{\tau_{k+1}} \bigg( \frac{\partial V}{\partial \bm{X}}(\bm{X}(\tau_k))\bar{\bm{B}}(\tau_k,\bm{X}(\tau_k)) \\
&\hspace{0.25in} -\frac{\partial V}{\partial \bm{X}}(\bm{X}(\tau_k))\bar{\bm{B}}(\tau_k,0) \bigg) d\sigma \\
&+\int_{\tau_k}^{\tau_{k+1}} \frac{\partial V}{\partial \bm{X}}(\bm{X}(\tau_k))\bar{\bm{B}}(\tau_k,0)  d\sigma
\end{align*}
Following the inequality \eqref{eq:dvdxL}, this can be upper bounded by:
\begin{align*}
V&(\bm{X}(\tau_{k+1}))-V(\bm{X}(\tau_k)) \\
& \leq - k^\ast \int_{\tau_k}^{\tau_{k+1}}  \| \bm{X}(\sigma)\|^2 d\sigma \\ 
& + \int_{\tau_k}^{\tau_{k+1}} (L\|\bm{X}(\sigma)\|+ L\|\bm{X}(\tau_k)\| + c ) \|  \bm{X}(\sigma)-\bm{X}(\tau_k)\| d\sigma \\
&\hspace{0.25in} +(\tau_{k+1}-\tau_{k}) L^2 \|\bm{X}(\tau_k)\|^2 +(\tau_{k+1}-\tau_{k}) cL \|\bm{X}(\tau_k)\|. 
\end{align*}
This yields:
\begin{align*}
V&(\bm{X}(\tau_{k+1}))-V(\bm{X}(\tau_k)) \\
& \leq - k^\ast \int_{\tau_k}^{\tau_{k+1}}  \| \bm{X}(\sigma)\|^2 d\sigma \\ 
& + \int_{\tau_k}^{\tau_{k+1}} (L\|\bm{X}(\tau_k)\| (e^{2L(\tau-\tau_k)}+1) +  \frac{c}{2L}  (e^{2L(\tau-\tau_k)} - 1) \\
& \hspace{0.25in}  + c ) \left( \|\bm{X}(\tau_k)\| + \frac{c}{2L} \right) (e^{2L(\tau-\tau_{k})} - 1) d\sigma \\
&\hspace{0.25in} +(\tau_{k+1}-\tau_{k}) L^2 \|\bm{X}(\tau_k)\|^2 +(\tau_{k+1}-\tau_{k}) cL \|\bm{X}(\tau_k)\| 
\end{align*}
or, 
\begin{align*}
V&(\bm{X}(\tau_{k+1}))-V(\bm{X}(\tau_k)) \\
& \leq - k^\ast \int_{\tau_k}^{\tau_{k+1}}  \| \bm{X}(\sigma)\|^2 d\sigma \\ 
& + \int_{\tau_k}^{\tau_{k+1}} \bigg(L\|\bm{X}(\tau_k)\|^2 (e^{2L(\tau-\tau_k)}+1)(e^{2L(\tau_k-\tau_0)} - 1) \\
&\hspace{0.25in}+  \frac{c}{2} \|\bm{X}(\tau_k)\| (e^{2L(\tau_k-\tau_0)} + 1)(e^{2L(\tau_k-\tau_0)} - 1) \\
& \hspace{0.25in}  + \frac{c}{2L}  \|\bm{X}(\tau_k)\|(e^{2L(\tau-\tau_{k})} - 1+2L)  (e^{2L(\tau_k-\tau_0)} - 1) \\
& \hspace{0.25in} +\frac{c^2}{4L^2}(e^{2L(\tau_k-\tau_0)} - 1)^2 + \frac{c^2}{2L}  (e^{2L(\tau_k-\tau_0)} - 1) \bigg) d\sigma \\
&\hspace{0.25in} +(\tau_{k+1}-\tau_{k}) L^2 \|\bm{X}(\tau_k)\|^2 +(\tau_{k+1}-\tau_{k}) cL \|\bm{X}(\tau_k)\|. 
\end{align*}
Upon integration and substitution of $\tau_{k+1}-\tau_k= 2 \pi \epsilon$ with $\epsilon=\omega^{-1}$, we obtain: 
\begin{align*}
V(\bm{X}(\tau_{k+1}))-&V(\bm{X}(\tau_k))  \leq - k^\ast  \| \bm{X}(\tau_k)\|^2 +\rho_0(\epsilon)\\
&+ \rho_1(\epsilon) \| \bm{X}(\tau_k)\|  + \rho_2(\epsilon) \| \bm{X}(\tau_k)\|^2
\end{align*}
where 
\begin{align*}
\rho_0(\epsilon)= &\frac{c^2}{4L^2} \bigg(\frac{1}{4L} (e^{8L\pi \epsilon}-4e^{4L\pi \epsilon}+3)+2\pi \epsilon\bigg) \\
&+ \frac{c^2}{2L^2} \bigg(\frac{1}{2L}( e^{4L\pi \epsilon} -1) -2\pi \epsilon\bigg)\\
\rho_1(\epsilon)= &  \bigg(\frac{c}{2}+ \frac{c}{2L} \bigg) \bigg(\frac{1}{4L} (e^{8L\pi\epsilon}-1)-2\pi \epsilon\bigg) \\
&+c \bigg(\frac{1}{2L} (e^{4L\pi \epsilon}-1)-2\pi \epsilon\bigg)+2\pi \epsilon cL\\
\rho_2(\epsilon) = & L\bigg(\frac{1}{4L} (e^{8L\pi\epsilon)}-1)-2\pi \epsilon\bigg) + 2\pi \epsilon L^2. 
\end{align*}
We let $k^\ast=k_1 + \rho_2(\epsilon)$. This yields:
\begin{align*}
V(\bm{X}(\tau_{k+1}))-V(\bm{X}(\tau_k))  \leq -& k_1  \| \bm{X}(\tau_k)\|^2 +\rho_0(\epsilon)\\
&+ \rho_1(\epsilon) \| \bm{X}(\tau_k)\|
\end{align*}
For $c\in(0,1)$, we can write:
\begin{align*}
V(\bm{X}(\tau_{k+1}))-&V(\bm{X}(\tau_k))  \leq - k_1(1-c)  \| \bm{X}(\tau_k)\|^2 +\rho_0(\epsilon)\\
&-k_1 c \| \bm{X}(\tau_k)\| \bigg(\|\bm{X}(\tau_k)\|-\frac{\rho_1(\epsilon)}{k_1 c}\bigg).
\end{align*}
We can then conclude that:
\begin{align*}
V(\bm{X}(\tau_{k+1}))-V(\bm{X}(\tau_k))  \leq -& k_1(1-c)  \| \bm{X}(\tau_k)\|^2 +\rho_0(\epsilon)
\end{align*}
for all $\bm{X}(\tau_k)$ such that 
$\|\bm{X}(\tau_k)\| \geq \frac{\rho_1(\epsilon)}{k_1 c}$.  Similarly, it follows that:
\begin{align*}
V(\bm{X}(\tau_{k+1}))-V(\bm{X}(\tau_k))  \leq -& \frac{k_1(1-c)}{2}  \| \bm{X}(\tau_k)\|^2
\end{align*}
for $\| \bm{X}(\tau_k)\|^2 \geq \max \left[\frac{2 \rho_0(\epsilon)}{k_1 (1-c)},\, \frac{\rho_1(\epsilon)^2}{k_1^2 c^2}\right]=M_1(\epsilon)$.
We let $\nu_1=\frac{k_1(1-c)}{2}$ and write:
\begin{align*}
V(\bm{X}(\tau_{1}))-V(\bm{X}(\tau_0))  \leq -&\nu_1 \| \bm{X}(\tau_0)\|^2
\end{align*}
for $\|\bm{X}(\tau_0)\|^2 \geq M_1(\epsilon)$. We let $\epsilon$ be small enough that $M_1(\epsilon)\leq \frac{\alpha_2}{\alpha_1}\sigma$.
This yields:
\begin{align*}
V(\bm{X}(\tau_{1})) \leq \min\left[\bigg(1-\frac{\nu_1}{\alpha_2}\bigg) V(\bm{X}(\tau_0)),\,\, \alpha_1 M_1(\epsilon)\right] .
\end{align*}
where $0\leq \bigg(1-\frac{\nu_1}{\alpha_2}\bigg)<1$. 
At the $n^{th}$ step, one obtains:
\begin{align*}
V(\bm{X}(\tau_{n})) \leq \min\bigg[\bigg(&1-\frac{\nu_1}{\alpha_2}\bigg)^n V(\bm{X}(\tau_{0}) ),\,\, \alpha_1 M_1(\epsilon)\bigg].
\end{align*}
This yields:
\begin{align*}
\|\bm{X}(\tau_{n})\|^2 \leq \min\bigg[\bigg(&1-\frac{\nu_1}{\alpha_2}\bigg)^n \frac{\alpha_2}{\alpha_1} \|\bm{X}(\tau_{0})\|^2,\,\, M_1(\epsilon)\bigg]. 
\end{align*}
Thus, we see that $\|\bm{X}(\tau_{n})\|^2 \leq \frac{\alpha_2}{\alpha_1} \sigma$ $\forall n \in \mathbb{N}$.
Let $n^\ast$ be such that:
\begin{align*}
\nu_1^\ast = \bigg(1-\frac{\nu_1}{\alpha_2}\bigg)^{n^{\ast}} \frac{\alpha_2}{\alpha_1} <1.
\end{align*}
Thus, we obtain:
\begin{align*}
\|\bm{X}(\tau_{n})\| \leq \min\bigg[\sqrt{\nu_1^\ast} \|\bm{X}(\tau_{0})\|,\,\,  \sqrt{M_1(\epsilon)}\bigg].
\end{align*}
As a result, we can see that $\|\bm{X}(\tau_{n^{\ast}})\|\leq \frac{\alpha_2}{\alpha_1} \sigma$.  Furthermore, we let $\lambda=\frac{\ln(1/\nu_1^\ast)}{4 n^\ast \pi \epsilon}$ to obtain:
 \begin{align*}
\|\bm{X}(\tau_{n^{\ast}})\| \leq \min\bigg[ e^{-\lambda(\tau_{n^\ast}-\tau_0)} \|\bm{X}(\tau_{0})\|,\,\,  \sqrt{M_1(\epsilon)}\bigg].
\end{align*}
Thus, it follows that one can repeat the procedure for any multiple of $n^\ast$ such that:
 \begin{align*}
\|\bm{X}(\tau_{mn^{\ast}})\| \leq \min\bigg[ e^{-\lambda(\tau_{mn^\ast}-\tau_0)} \|\bm{X}(\tau_{0})\|,\,\,\sqrt{M_1(\epsilon)}\bigg]
\end{align*}
for any $m\in \mathbb{N}$. 

Since $\|\bm{X}(\tau_{n})\| \leq \frac{\alpha_2}{\alpha_1} \sigma$ $\forall n$, it follows that the solution $\bm{X}(\tau)$ is such that:
\begin{align*} 
\| \bm{X}(\tau) \| \leq e^{2L(\tau-\tau_{mn^{\ast}})} \|\bm{X}(\tau_{mn^{\ast}})\| +\frac{c}{2L}\bigg( e^{2L(\tau-\tau_{mn^{\ast}})}-1\bigg)
\end{align*}
for $\tau \in [\tau_{mn^{\ast}},\,\tau_{(m+1)n^{\ast}}[$. As a result, we obtain:
\begin{align*} 
\| \bm{X}(\tau) \| &\leq \\&e^{2L(\tau-\tau_{mn^{\ast}})} \min\bigg[e^{-\lambda(\tau_{mn^\ast}-\tau_0)} \|\bm{X}(\tau_{0})\|,\,\, \sqrt{M_1(\epsilon)}\bigg] \\
& +\frac{c}{2L}\bigg( e^{2L(\tau-\tau_{mn^{\ast}})}-1\bigg) 
\end{align*}
Rearranging, this yields:
\begin{align*} 
\| \bm{X}(\tau) \| \leq & e^{(\lambda + 2L)(\tau-\tau_{mn^{\ast}})} e^{-\lambda(\tau-\tau_0)} \|\bm{X}(\tau_{0})\|\\
&+ e^{2L(\tau-\tau_{mn^{\ast}})} \sqrt{M_1(\epsilon)}+\frac{c}{2L}\bigg( e^{2L(\tau-\tau_{mn^{\ast}})}-1\bigg) 
\end{align*}
or
\begin{align*} 
\| \bm{X}(\tau) \| \leq e^{(\lambda + 2L)n^{\ast}\delta} &e^{-\lambda(\tau-\tau_0)} \|\bm{X}(\tau_{0})\| \\
&+ e^{2Ln^{\ast}\delta} \sqrt{M_1(\epsilon)}+\frac{c}{2L}\bigg( e^{2Ln^{\ast}\delta}-1\bigg).
\end{align*}
If we let $\delta = 2\pi \epsilon$, this gives:
\begin{align*} 
\| \bm{X}(\tau) \| \leq &e^{(\lambda + 2L)2\pi n^{\ast}\epsilon} e^{-\lambda(\tau-\tau_0)} \|\bm{X}(\tau_{0})\| \\
&+  e^{4 \pi L n^{\ast}\epsilon}  \sqrt{M_1(\epsilon)}+\frac{c}{2L}\bigg( e^{4\pi Ln^{\ast}\epsilon}-1\bigg)
\end{align*}
which holds for all $\tau\geq\tau_0$ and all $\|\bm{X}(\tau_0) \|^2 \leq \frac{\alpha_2}{\alpha_1} \sigma$. As a result, practical stability is achieved in the $\tau$ timescale.
Since practical stability is achieved as $\tau\rightarrow \infty$ then we conclude that prescribed-time practical stability of the closed-loop nominal system is achieved as $t\rightarrow T$. 

%
%
%


\section{Simulation Study} \label{sec4}

\subsection{General Nonlinear System}
We consider the following nonlinear system:
\begin{align*}
   \dot{x}_1 &= (-x_1 + x_2^2) \\
   \dot{x}_2 &= -x_1 + x_2 + u \\
   y &= 1 + x_2^2 + x_1^2.
\end{align*}
The objective is to stabilize the system and minimize the cost function within the prescribed time. The control system with the proposed timescale transformation is implemented with the following tuning parameters and initial conditions: $T = 5, A = 25, \omega_1 = 150, \omega_h = 2000, \omega_l = 3, k = 25, \tau_I = 0.5, x_1(0) = 1, x_2(0) = 2$.

The closed-loop trajectories of the state variables and the output are shown in Figure \ref{fig:gen_sys_states}. It can be seen that the system states and output converges to the optimum within the prescribed time. The estimate of the output is shown in Figure \ref{fig:gen_sys_out} and can be seen to be driven to converge to zero. 

\begin{figure}[h]
   \centering
   \includegraphics[width = 0.75\linewidth]{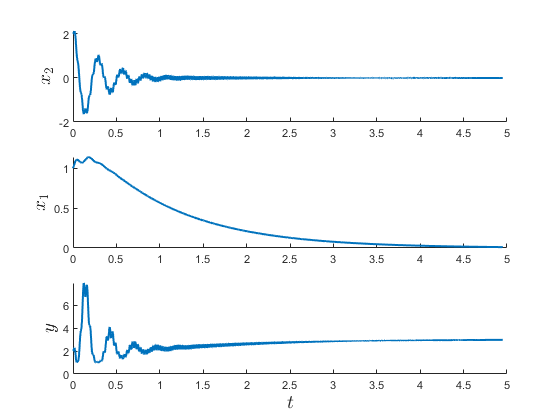}
   \caption{Closed-loop trajectories of the system. The top plot shows the state $x_2$, the middle plot shows $x_1$ and the bottom plot shows the output $y$.}
   \label{fig:gen_sys_states}
\end{figure}

\begin{figure}[h]
   \centering
   \includegraphics[width = 0.75\linewidth]{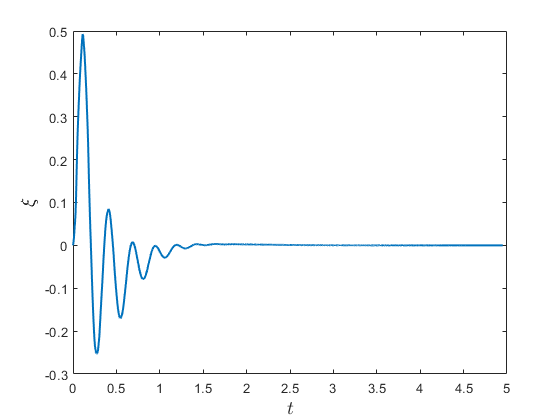}
   \caption{Estimate of the output $\xi$.}
   \label{fig:gen_sys_out}
\end{figure}

\subsection{Batch-fed Bioreactor}
Now we consider a batch-fed bioreactor described by the following system of equations:
\begin{align*}
    \dot{x}_1 &= \frac{x_1 x_2}{0.2 + x_2} - x_1u \\
    \dot{x}_2 &= -\frac{2 x_1 x_2}{0.2 + x_2} + (10 - x_2)u \\
    y &= -\frac{x_1 x_2}{0.2 + x_2}
\end{align*}
where the states $x_1$ and $x_2$ are the biomass and substrate concentrations respectively, and $u$ is the feed rate. In this instance the cost function is optimizing the rate of generation of biomass concentration. 
The system parameters used in this simulation are: $T = 50, A = 0.5, \omega_1 = 150, \omega_h = 2000, \omega_l = 3, k = 2, \tau_I = 0.5, x_1(0) = 1, x_2(0) = 1$.

Figure \ref{fig:bio_states} presents the states and output results of the bioreactor, with the targeted state $x_2$ seen to be converging at the optimum within the prescribed time. The estimate of the output is also shown to converge to zero in Figure \ref{fig:bio_out}.
\begin{figure}[htb]
    \centering
    \includegraphics[width = 0.75\linewidth]{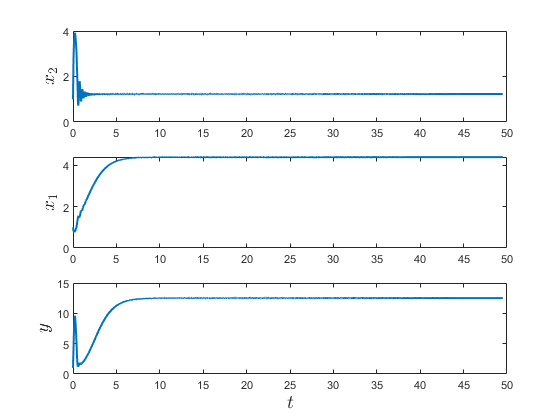}
    \caption{Closed-loop trajectories of the bioreactor. The top plot shows the state $x_2$, the middle plot shows $x_1$ and the bottom plot shows the output $y$.}
    \label{fig:bio_states}
\end{figure}

\begin{figure}[htb]
    \centering
    \includegraphics[width = 0.75\linewidth]{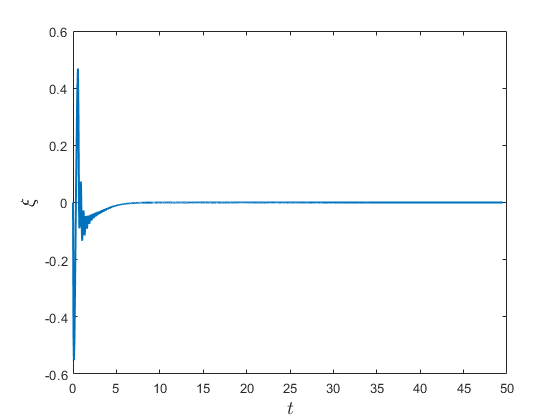}
    \caption{Estimate of the output $\xi$ in the bioreactor.}
    \label{fig:bio_out}
\end{figure}



\begin{figure}[htb]
    \centering
    \includegraphics[width = 0.75\linewidth]{images/bio_out.png}
    \caption{Estimate of the output $\xi$ in the bioreactor.}
    \label{fig:bio_out}
\end{figure}

\section{Conclusion} \label{sec5}
In this paper a new strategy is proposed for providing prescribed time convergence in unknown dynamical nonlinear systems using a dual-mode extremum-seeking control design. The approach expresses the system in a new timescale in which a state-feedback controller is implemented to achieve exponential stability. Through the use of the timescale transformation, the system is then stabilized in prescribed time in the original timescale to the optimum steady-state for the state, input, and output variables. In future work, we will look to extend the control to continue beyond the prescribed settling time.

\bibliography{DMPTESC}

\begin{thebibliography}{21}
\providecommand{\natexlab}[1]{#1}
\providecommand{\url}[1]{\texttt{#1}}
\providecommand{\urlprefix}{URL }
\expandafter\ifx\csname urlstyle\endcsname\relax
  \providecommand{\doi}[1]{doi:\discretionary{}{}{}#1}\else
  \providecommand{\doi}{doi:\discretionary{}{}{}\begingroup
  \urlstyle{rm}\Url}\fi

\bibitem[{Aeyels and Peuteman(1999)}]{AEYELS19991091}
Aeyels, D. and Peuteman, J. (1999).
\newblock On exponential stability of nonlinear time-varying differential
  equations.
\newblock \emph{Automatica}, 35(6), 1091--1100.
\newblock \doi{https://doi.org/10.1016/S0005-1098(99)00012-6}.
\newblock
  \urlprefix\url{https://www.sciencedirect.com/science/article/pii/S0005109899000126}.

\bibitem[{Bertino et~al.(2022)Bertino, Naseradinmousavi, and
  Krsti\'c}]{9867788}
Bertino, A., Naseradinmousavi, P., and Krsti\'c, M. (2022).
\newblock Experimental and analytical prescribed-time trajectory tracking
  control of a 7-dof robot manipulator.
\newblock In \emph{2022 American Control Conference (ACC)}, 1941--1946.
\newblock \doi{10.23919/ACC53348.2022.9867788}.

\bibitem[{Guay(2016)}]{7386601}
Guay, M. (2016).
\newblock A perturbation-based proportional integral extremum-seeking control
  approach.
\newblock \emph{IEEE Transactions on Automatic Control}, 61(11), 3370--3381.
\newblock \doi{10.1109/TAC.2016.2519840}.

\bibitem[{Guay(2020)}]{9304072}
Guay, M. (2020).
\newblock Finite-time newton seeking control for a class of unknown static
  maps.
\newblock In \emph{2020 59th IEEE Conference on Decision and Control (CDC)},
  1435--1440.
\newblock \doi{10.1109/CDC42340.2020.9304072}.

\bibitem[{Guay(2021)}]{GUAY202131}
Guay, M. (2021).
\newblock Dual mode finite-time seeking control for a class of unknown
  dynamical system.
\newblock \emph{IFAC-PapersOnLine}, 54(3), 31--36.
\newblock 16th IFAC Symposium on Advanced Control of Chemical Processes ADCHEM
  2021.

\bibitem[{Guay(2023)}]{GUAY202355}
Guay, M. (2023).
\newblock A dual-mode finite-time extremum seeking controller design technique.
\newblock \emph{IFAC-PapersOnLine}, 56(1), 55--60.
\newblock 12th IFAC Symposium on Nonlinear Control Systems NOLCOS 2022.

\bibitem[{Guay and Atta(2018)}]{8430884}
Guay, M. and Atta, K.T. (2018).
\newblock Dual mode extremum-seeking control via lie-bracket averaging
  approximations.
\newblock In \emph{2018 Annual American Control Conference (ACC)}, 2972--2977.
\newblock \doi{10.23919/ACC.2018.8430884}.

\bibitem[{Krishnamurthy et~al.(2020)Krishnamurthy, Khorrami, and
  Krsti\'c}]{KRISHNAMURTHY2020108860}
Krishnamurthy, P., Khorrami, F., and Krsti\'c, M. (2020).
\newblock A dynamic high-gain design for prescribed-time regulation of
  nonlinear systems.
\newblock \emph{Automatica}, 115, 108860.

\bibitem[{Krsti\'c and Wang(2000)}]{KRSTIC2000595}
Krsti\'c, M. and Wang, H.H. (2000).
\newblock Stability of extremum seeking feedback for general nonlinear dynamic
  systems.
\newblock \emph{Automatica}, 36(4), 595--601.

\bibitem[{Neši\'c et~al.(2010)Neši\'c, Tan, Moase, and Manzie}]{5717929}
Neši\'c, D., Tan, Y., Moase, W.H., and Manzie, C. (2010).
\newblock A unifying approach to extremum seeking: Adaptive schemes based on
  estimation of derivatives.
\newblock In \emph{49th IEEE Conference on Decision and Control (CDC)},
  4625--4630.
\newblock \doi{10.1109/CDC.2010.5717929}.

\bibitem[{Peuteman and Aeyels(2011)}]{PEUTEMAN2011192}
Peuteman, J. and Aeyels, D. (2011).
\newblock Averaging techniques without requiring a fast time-varying
  differential equation.
\newblock \emph{Automatica}, 47(1), 192--200.
\newblock \doi{https://doi.org/10.1016/j.automatica.2010.10.039}.
\newblock
  \urlprefix\url{https://www.sciencedirect.com/science/article/pii/S000510981000453X}.

\bibitem[{Poveda and Krsti\'c(2020{\natexlab{a}})}]{9148026:20a}
Poveda, J. and Krsti\'c, M. (2020{\natexlab{a}}).
\newblock Fixed-time gradient-based extremum seeking.
\newblock In \emph{2020 American Control Conference (ACC)}, 2838--2843.
\newblock \doi{10.23919/ACC45564.2020.9148026}.

\bibitem[{Poveda and Krsti\'c(2020{\natexlab{b}})}]{POVEDA20205356:20b}
Poveda, J. and Krsti\'c, M. (2020{\natexlab{b}}).
\newblock Fixed-time newton-like extremum seeking.
\newblock \emph{IFAC-PapersOnLine}, 53(2), 5356--5361.
\newblock 21st IFAC World Congress.

\bibitem[{Poveda and Krstić(2021)}]{9369042}
Poveda, J.I. and Krstić, M. (2021).
\newblock Nonsmooth extremum seeking control with user-prescribed fixed-time
  convergence.
\newblock \emph{IEEE Transactions on Automatic Control}, 66(12), 6156--6163.
\newblock \doi{10.1109/TAC.2021.3063700}.

\bibitem[{Ríos and Teel(2018)}]{RIOS2018103}
Ríos, H. and Teel, A.R. (2018).
\newblock A hybrid fixed-time observer for state estimation of linear systems.
\newblock \emph{Automatica}, 87, 103--112.

\bibitem[{Song et~al.(2019)Song, Wang, and
  Krsti\'c}]{https://doi.org/10.1002/rnc.4084}
Song, Y., Wang, Y., and Krsti\'c, M. (2019).
\newblock Time-varying feedback for stabilization in prescribed finite time.
\newblock \emph{International Journal of Robust and Nonlinear Control}, 29(3),
  618--633.

\bibitem[{Tan et~al.(2006)Tan, Nešić, and Mareels}]{TAN2006889}
Tan, Y., Nešić, D., and Mareels, I. (2006).
\newblock On non-local stability properties of extremum seeking control.
\newblock \emph{Automatica}, 42(6), 889--903.

\bibitem[{Tan\u{o} et~al.(2010)Tan\u{o}, Moase, Manzie, Nesi\'c, and
  Mareels}]{5572972}
Tan\u{o}, Y., Moase, W., Manzie, C., Nesi\'c, D., and Mareels, I. (2010).
\newblock Extremum seeking from 1922 to 2010.
\newblock In \emph{Proceedings of the 29th Chinese Control Conference}, 14--26.

\bibitem[{Teel et~al.(1999)Teel, Peuteman, and Aeyels}]{teel1999semi}
Teel, A.R., Peuteman, J., and Aeyels, D. (1999).
\newblock Semi-global practical asymptotic stability and averaging.
\newblock \emph{Systems \& control letters}, 37(5), 329--334.

\bibitem[{Yilmaz and Krsti\'c(2022{\natexlab{a}})}]{9867881:22a}
Yilmaz, C. and Krsti\'c, M. (2022{\natexlab{a}}).
\newblock Prescribed-time extremum seeking with chirpy probing for pdes—part
  i: Delay.
\newblock In \emph{2022 American Control Conference (ACC)}, 1000--1005.
\newblock \doi{10.23919/ACC53348.2022.9867881}.

\bibitem[{Yilmaz and Krsti\'c(2022{\natexlab{b}})}]{9867441:22b}
Yilmaz, C. and Krsti\'c, M. (2022{\natexlab{b}}).
\newblock Prescribed-time extremum seeking with chirpy probing for pdes—part
  ii: Heat pde.
\newblock In \emph{2022 American Control Conference (ACC)}, 800--805.
\newblock \doi{10.23919/ACC53348.2022.9867441}.

\end{thebibliography}
\end{document}